\documentclass[a4paper]{amsart}
\usepackage{latexsym,bm}
\usepackage{amsmath,amsthm,amsfonts,amssymb,mathrsfs}

\usepackage{times,amssymb,amsmath,mathrsfs}







\let\<=\langle
\let\>=\rangle
\def\({\big(}
\def\){\big)}

\def\pmod#1{\text{ }(\text{mod } #1)\,}

\let\Sym=\BS

\newcommand{\N}{\mathbb N}
\newcommand{\Z}{\mathbb Z}

\newcommand{\HH}{\mathscr{H}}

\DeclareMathOperator\id{id}
\DeclareMathOperator\Id{id}

\newcommand{\Q}{\mathbb Q}

\def\A{\mathcal{A}}

\newcommand{\lam}{\lambda}

\newcommand{\eps}{\epsilon}

\DeclareMathOperator{\diag}{Diag}
\DeclareMathOperator{\rank}{rank}

\DeclareMathOperator{\Ker}{Ker}

\newcounter{main}
\theoremstyle{plain}

\numberwithin{equation}{section}
\newtheorem{prop}[equation]{Proposition}
\newtheorem{thm}[equation]{Theorem}
\newtheorem{cor}[equation]{Corollary}
\newtheorem{lem}[equation]{Lemma}
\newtheorem{conj}[equation]{Conjecture}
\theoremstyle{definition}
\newtheorem{dfn}[equation]{Definition}
\theoremstyle{remark}

  {\refstepcounter{equation}\trivlist
   \item[\hskip\labelsep\theequation.~\textbf{Example#1}\space]
   \ignorespaces
  }{\unskip\nobreak\hfil%
    \penalty50\hskip2em\hbox{}\nobreak\hfil$\Diamond$%
    \parfillskip=0pt\finalhyphendemerits=0\penalty-100\endtrivlist}

{\catcode`\|=\active
  \gdef\set#1{\mathinner{\lbrace\,{\mathcode`\|"8000%
                                   \let|\midvert #1}\,\rbrace}}
}
\def\midvert{\egroup\mid\bgroup}


\begin{document}
\title{Integral $u$-deformed involution modules}
\subjclass[2010]{20C08}
\keywords{Coxeter groups, Hecke algebras, twisted involutions}

 \author{Jun Hu}
   \address{School of Mathematics and Statistics\\
  Beijing Institute of Technology\\
  Beijing, 100081, P.R. China}
  \email{junhu404@bit.edu.cn}

 \author{Yujiao Sun}
   \address{School of Mathematics and Statistics\\
  Beijing Institute of Technology\\
  Beijing, 100081, P.R. China}
\email{yujiaosun@bit.edu.cn}

\begin{abstract}
Let $(W,S)$ be a Coxeter system and $\ast$ an automorphism of $W$ with order $\leq 2$ and $S^{\ast}=S$.
Lusztig and Vogan (\cite{Lu1}, \cite{LV1}) have introduced a $u$-deformed version $M_u$ of Kottwitz's involution module over the Iwahori--Hecke algebra $\HH_{u}(W)$ with Hecke parameter $u^2$, where $u$ is an indeterminate. Lusztig has proved that $M_u$ is isomorphic to the left $\HH_{u}(W)$-submodule of $\hat{\HH}_u$ generated by $X_{\emptyset}:=\sum_{w^*=w\in W}u^{-\ell(w)}T_w$, where $\hat{\HH}_u$ is the vector space consisting of all formal (possibly infinite) sums $\sum_{x\in W}c_xT_x$ ($c_x\in\Q(u)$ for each $x$). He speculated that one can extend this by replacing $u$ with any $\lam\in \mathbb{C}\setminus\{0,1,-1\}$.  In this paper, we give a positive answer to his speculation for any $\lam\in K\setminus\{0,1,-1\}$ and any $W$, where $K$ is an arbitrary ground field.
\end{abstract}

\maketitle


\section{Introduction}

Let $(W,S)$ be a fixed Coxeter system and $\ast$ be a fixed automorphism of $W$ with order $\leq 2$ and such that $S^\ast=S$. That is, $s^{\ast}\in S$ for any $s\in S$. Let $\ell: W\rightarrow\mathbb{N}$ be the usual length function on $W$. If $w\in W$ then by definition
$$\ell(w):=\min\{k \mid w=s_{i_1}\dots s_{i_k} \text{ for some }s_{i_1},\dots,s_{i_k}\in S\}.$$

\begin{dfn} \label{twistedinvolutions} We define $\mathbf{I}_{\ast}:=\bigl\{w\in W\bigm|w^{\ast}=w^{-1}\bigr\}$.
The elements of $I_{\ast}$ will be called {\it twisted involutions} relative to $\ast$.
\end{dfn}

Let $u$ be an indeterminate over $\Q$ (the field of rational numbers).

\begin{dfn}[\cite{GP, KL}] \label{IH} Let $\HH_{u}:=\HH_{u}(W)$ be the associative unital $\Q(u)$-algebra with a $\Q(u)$-basis $\{T_w \mid w\in W\}$and multiplication defined by $$\begin{aligned}
& T_w T_{w'}=T_{ww'} \,\,\,\text{if $\ell(ww')=\ell(w)+\ell(w')$;}\\
& (T_s+1)(T_s-u^2)=0 \,\,\,\text{if $s\in S$.}
\end{aligned}
$$
We call $\HH_{u}(W)$ the Iwahori--Hecke algebra over $\Q(u)$ associated to $(W,S)$ with Hecke parameter $u^2$.
\end{dfn}

Let $\mathcal{A}:=\Z[u,u^{-1}]$ be the ring of Laurent polynomials on $u$. Let $\HH_{\A,u}$ be the $\A$-subalgebra of $\HH_{u}$ generated by $\{T_w \mid w\in W\}$. Then $\HH_{\A,u}$ is a natural $\A$-form of $\HH_{u}$ and isomorphic to the abstract $\A$-algebra defined by the same generators and relations as in Definition \ref{IH}. For any field $K$ and any $\lam\in K^\times$, there is a unique ring homomorphism $\phi: \A\rightarrow K$ satisfying that $\phi(u)=\lam$. We define
$\HH_\lam:=K\otimes_{\A}\HH_u$ and call $\HH_\lam$ the specialized Iwahori--Hecke algebra associated to $(W,S)$ with Hecke parameter $\lam^2$.

Let $M_u$ be a $\Q(u)$-linear space with a $\Q(u)$-basis $\{a_z \mid z\in \mathbf{I}_\ast\}$.

\begin{lem}[\cite{Lu1, LV1}] \label{LVInvoModu} There is a unique $\HH_{u}$-module structure on $M_u$ such that for any $s\in S$ and any $w\in \mathbf{I}_\ast$, $$\begin{aligned}
& T_s a_w=ua_w+(u+1)a_{sw}\quad\text{if\, $sw=ws^\ast>w$;}\\
& T_s a_w=(u^2-u-1)a_w+(u^2-u)a_{sw}\quad\text{if\, $sw=ws^\ast<w$;}\\
& T_s a_w=a_{sws^\ast}\quad\text{if\, $sw\neq ws^\ast>w$;}\\
& T_s a_w=(u^2-1)a_w+u^2a_{sws^\ast}\quad\text{if\, $sw\neq ws^\ast<w$.}\\
\end{aligned}
$$
\end{lem}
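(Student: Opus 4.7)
Uniqueness is immediate because $\HH_u$ is generated as a $\Q(u)$-algebra by $\{T_s\mid s\in S\}$, so prescribing the action of each $T_s$ on the basis $\{a_w\mid w\in\mathbf{I}_\ast\}$ determines the whole module structure. For existence, the plan is to invoke the presentation of $\HH_u$ with generators $T_s$ ($s\in S$) subject to (i) the quadratic relations $(T_s+1)(T_s-u^2)=0$ and (ii) the braid relations $T_sT_tT_s\cdots=T_tT_sT_t\cdots$ ($m_{st}$ factors on each side) for $s\neq t$ with $m_{st}<\infty$: define $\Q(u)$-linear operators $T_s$ on $M_u$ by the four formulas in the statement and check (i) and (ii) on every basis vector $a_w$.

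The quadratic relation is a bookkeeping verification. Fix $s\in S$ and $w\in\mathbf{I}_\ast$; the four cases in the statement are mutually exclusive and exhaustive, and $T_s a_w$ lies in the span of at most two basis vectors. A second application of $T_s$ is governed by another case which is uniquely determined by the first: if $w$ is in Case~1 then $sw$ is in Case~2, if $w$ is in Case~3 then $sws^\ast$ is in Case~4, and similarly for the reverse directions. In each of the four starting cases a short polynomial identity in $u$ then yields $T_s^2 a_w=(u^2-1)T_s a_w+u^2 a_w$.

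The main obstacle is (ii), the braid relation. For fixed $s\neq t$ with $m=m_{st}<\infty$ and $w\in\mathbf{I}_\ast$, I would restrict to the parabolic subalgebra $\HH_u(W_{s,t})$, where $W_{s,t}=\langle s,t\rangle$ is dihedral. Let $\mathcal{O}_w\subseteq\mathbf{I}_\ast$ be the smallest subset containing $w$ closed under the elementary moves $v\mapsto sv$ or $v\mapsto svs^\ast$ coming from the lemma (and analogously for $t$); by the formulas, the $\Q(u)$-span of $\{a_v\mid v\in\mathcal{O}_w\}$ is an $\HH_u(W_{s,t})$-stable subspace. Picking a length-minimal $v_0\in\mathcal{O}_w$ and enumerating $\mathcal{O}_w$ by a finite case analysis (depending on $m$, on whether $\{s,t\}$ is $\ast$-stable, and on the $\ast$-twisted commutation of $v_0$ with $s$ and $t$), one unfolds both sides of the braid identity applied to $a_{v_0}$ into explicit sums with polynomial coefficients in $u$ and compares. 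Once the identity holds on $a_{v_0}$, it propagates to every $a_v$ with $v\in\mathcal{O}_w$ by induction on $\ell(v)$ using the quadratic relation already verified. The technical core of the lemma lies in this bookkeeping for the dihedral orbit structure.
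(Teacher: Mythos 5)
First, note that the paper does not prove this lemma at all: it is quoted verbatim from \cite{Lu1, LV1}, so your proposal can only be measured against the proofs in those sources. Your strategy is in fact the standard algebraic one used there: realize $\HH_u$ by the quadratic and braid relations, define the operators by the four formulas, verify the quadratic relation by the two-case pairing (Case~1 $\leftrightarrow$ Case~2 via $w\mapsto sw$, Case~3 $\leftrightarrow$ Case~4 via $w\mapsto sws^\ast$ --- this pairing is correct, using Corollary~\ref{length0} for the length claims), and reduce the braid relation to the dihedral parabolic $\langle s,t\rangle$ with $m_{st}<\infty$. Two remarks on the parts you do argue. Your propagation step is true but not for the reason you give: ``induction on $\ell(v)$ using the quadratic relation'' does not by itself let you move the braid word past a generator. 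What saves it is the word identity $\tau_sB_2=B_1\tau_x$ (where $B_1=\tau_s\tau_t\cdots$, $B_2=\tau_t\tau_s\cdots$ with $m$ factors and $x$ is the last letter of the alternating word of length $m+1$ beginning with $s$), which together with the quadratic relation yields $(B_1-B_2)\tau_s=\bigl((u^2-1)-\tau_s\bigr)(B_1-B_2)$ and its $s\leftrightarrow t$ analogue; hence $\Ker(B_1-B_2)$ is stable under $\tau_s$ and $\tau_t$, and agreement on the cyclic generator $a_{v_0}$ does propagate to the whole orbit span. You should also justify that $a_{v_0}$ generates that span, which follows from the upward formulas since the coefficients $u+1$ and $1$ are nonzero in $\Q(u)$.

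The genuine gap is that the entire substantive content of the lemma --- the verification of $B_1a_{v_0}=B_2a_{v_0}$ for every type of dihedral orbit --- is deferred with the assertion that one ``unfolds both sides \ldots and compares.'' This is not a bookkeeping afterthought; it is essentially the whole proof in \cite{Lu1} and \cite{LV1}, and it requires a careful classification of the $\ltimes$-orbits of $\langle s,t\rangle$ on $\mathbf{I}_\ast$ (the orbit is not determined only by whether $\{s,t\}$ is $\ast$-stable and by the behaviour of $v_0$; one must also track at which ranks of the orbit chain the condition $sv=vs^\ast$ versus $sv\neq vs^\ast$ occurs, since the four formulas contribute different coefficients at each step, and the two sides of the braid identity traverse the orbit diagram along different paths). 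Without exhibiting these computations, or at least the orbit classification together with a uniform argument for why the two unfoldings agree, the proposal is an accurate outline of the known proof rather than a proof.
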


When $u$ is specialized to $1$, the module $M_u$ degenerates to the involution module introduced more than fifteen years ago by Kottwitz \cite{Ko}. Kottwitz
found the module by analyzing Langlands¡¯ theory of stable characters for real groups. He gave a conjectural description of it (later established by Casselman) in terms of the Kazhdan-Lusztig left cell representations of the Weyl group $W$. One interesting fact about the module $M_u$ is that if $W$ is of finite classical type then any irreducible representation $V$ appears as a component of $M_u$ if and only if $V$ is a special irreducible representation of $W$ in the sense of \cite{Lu0}. For this reason, we call $M_u$ the $u$-deformed involution modules.

In a series of papers \cite{Lu1}, \cite{Lu2}, \cite{Lu3}, \cite{LV1}, Lusztig and Vogan have studied the $u$-deformed involution modules systematically. A bar invariant canonical basis for $M_u$ and certain coefficient polynomials $P_{y,w}^{\sigma}$ were introduced, which can be regarded as some twisted analogue of  the classical well-known Kazhdan--Lusztig basis and Kazhdan--Lusztig polynomials (\cite{KL}).

Let $\hat{\HH}_{\mathcal{A},u}$ (resp., $\hat{\HH}_u$) be the free $\mathcal{A}$-module (resp., the $\Q(u)$-vector space) consisting of all formal (possibly infinite) sums $\sum_{x\in W}c_xT_x$, where $c_x\in\mathcal{A}$ (resp., $c_x\in\Q(u)$) for each $x\in W$.

\begin{dfn} \text{(\cite{Lu2})} We define $$
X_{\emptyset}:=\sum_{x\in W, x^\ast=x}u^{-\ell(x)}T_x \in \hat{\HH}_{\mathcal{A},u} \subseteq \hat{\HH}_u.
$$
\end{dfn}

\begin{thm}[\cite{Lu3}] \label{iso1}  The map $\mu: M_u\rightarrow \hat{\HH}_u$ which sends $a_1$ to $X_\emptyset$ can be extended uniquely to a left $\HH_u$-module isomorphism $M_u\cong\HH_u X_\emptyset$.
\end{thm}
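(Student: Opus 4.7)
The plan is to use that $M_u$ is cyclic with generator $a_1$ and that $\hat{\HH}_u$ carries a natural left $\HH_u$-action, then to build $\mu$ by declaring $\mu(a_1)=X_\emptyset$ and extending along chains of twisted involutions. Cyclicity is immediate from Lemma \ref{LVInvoModu}: the first formula gives $a_s=(u+1)^{-1}(T_s-u)\cdot a_1$ for $s=s^\ast$, and the third gives $a_{ss^\ast}=T_s\cdot a_1$ for $s\neq s^\ast$; a similar inductive step (descending along some $s$ with $sw<w$) then expresses every $a_w$ as an $\HH_u$-multiple of $a_1$. In particular any $\HH_u$-module map out of $M_u$ is determined by its value on $a_1$, giving uniqueness of $\mu$.

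For existence, I would first check that $\hat{\HH}_u$ is a well-defined left $\HH_u$-module: fixing $w,y\in W$, one shows by induction on $\ell(w)$ that only finitely many $x\in W$ have $T_y$ in the support of $T_wT_x$, so that $T_w\cdot\sum_x c_xT_x$ is a legitimate element of $\hat{\HH}_u$. Granted this, set $\tilde\mu(h):=hX_\emptyset$; this is an $\HH_u$-module map $\HH_u\to\hat{\HH}_u$ with image $\HH_uX_\emptyset$, and the theorem is equivalent to the equality $\Ker\tilde\mu=\mathrm{Ann}_{\HH_u}(a_1)$.

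To establish this, I would inductively define elements $A_w\in\HH_uX_\emptyset$ for each $w\in\mathbf{I}_\ast$: set $A_1:=X_\emptyset$; pick $s\in S$ with $sw<w$; and define $A_w$ from $A_{sw}$ (when $sw=ws^\ast$) or from $A_{sws^\ast}$ (when $sw\neq ws^\ast$) by inverting the appropriate formula of Lemma \ref{LVInvoModu}. The crux is to verify that $A_w$ is independent of the chosen chain and that $T_s\cdot A_w$ satisfies the correct formula of Lemma \ref{LVInvoModu} for \emph{every} $s\in S$. By cyclicity, this essentially reduces to the base case $w=1$: one must compute $T_sX_\emptyset=\sum_{x^\ast=x}u^{-\ell(x)}T_sT_x$, split the sum according to $sx>x$ or $sx<x$, apply the quadratic relation, and reorganize to recover $uX_\emptyset+(u+1)A_s$ (when $s=s^\ast$) or $A_{ss^\ast}$ (when $s\neq s^\ast$). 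This combinatorial rearrangement, which rests on carefully tracking how left multiplication by $s$ permutes $\mathbf{I}_\ast$ (in particular whether $sx\in\mathbf{I}_\ast$ or only $sxs^\ast\in\mathbf{I}_\ast$) and on bookkeeping of powers of $u$, is the main technical obstacle.

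Injectivity of $\mu$ will follow from a leading-term argument: by induction on $\ell(w)$, the coefficient of $T_w$ in the expansion of $A_w$ in the $T$-basis is a non-zero Laurent polynomial in $u$, while all other non-zero coefficients occur at strictly smaller or $\ast$-related elements. Distinct leading supports then yield linear independence of $\{A_w\}_{w\in\mathbf{I}_\ast}$ in $\hat{\HH}_u$, so $\mu\colon M_u\to\HH_uX_\emptyset$ is an isomorphism.
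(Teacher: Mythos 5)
This theorem is not proved in the paper: it is quoted from Lusztig \cite{Lu3} (with the type $A$, $\ast=\Id$ case done in \cite{HZH1}), and the rest of the paper takes the existence of $\mu$ and the formula of Lemma \ref{keylem1} as given. So your proposal has to stand on its own, and as it stands it is an outline with the hard part missing. Your uniqueness argument (cyclicity of $M_u$ over $a_1$, by descending induction using Lemma \ref{LVInvoModu}) is fine, and you correctly reduce existence to the inclusion $\mathrm{Ann}_{\HH_u}(a_1)\subseteq\Ker\tilde\mu$. But the verification that the proposed elements $A_w$ satisfy \emph{all four} formulas of Lemma \ref{LVInvoModu} --- which you yourself flag as ``the main technical obstacle'' --- is precisely the content of the theorem, and your claim that it ``essentially reduces to the base case $w=1$'' is not justified. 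For $w=1$ the identities $T_sX_\emptyset=uX_\emptyset+(u+1)A_s$ (resp. $=A_{ss^\ast}$) are true essentially by your \emph{definition} of $A_s$; the real work is showing that for a general $w\in\mathbf{I}_\ast$ and a generator $t\in S$ not used in the chain defining $A_w$, the element $T_tA_w$ again obeys the correct formula. Cyclicity only tells you $A_w=h_wX_\emptyset$ for some $h_w\in\HH_u$; it does not convert the relation for $(t,w)$ into the relation for $(t,1)$. In \cite{Lu3} (and in \cite{HZH1}) this step is a genuinely delicate induction on the coefficients $L^x_z$, not a formal consequence of the $w=1$ computation.

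Your injectivity argument also rests on a false leading-term claim. Take $W$ of type $A_2$ with $\ast$ the nontrivial diagram involution and $w=s_1s_2\in\mathbf{I}_\ast$. Then
$T_{s_1s_2}a_1=T_{s_1}a_{s_2s_1}=u\,a_{s_2s_1}+(u+1)a_{s_1s_2s_1}$, so $L^{s_1s_2}_{s_1s_2}=0$ and hence the coefficient of $T_w$ in $A_w=\mu(a_w)$ vanishes. The correct triangularity, which is what the paper uses in Proposition \ref{keyprop1} and Proposition \ref{mainprop}, is taken with respect to $T_{\sigma_w}$, where $\sigma_w$ is the product of a reduced $\mathbf{I}_\ast$-expression and has length $\rho(w)=(\ell(w)+\ell^*(w))/2$, generally strictly smaller than $\ell(w)$; there the diagonal coefficient is $(1-u^{-1})^{\ell^*(w)}$ up to sign, which is indeed nonzero. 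So the skeleton (uniqueness by cyclicity, existence by checking the relations on $\HH_uX_\emptyset$, injectivity by triangularity) matches the known proofs, but both the existence verification and the pivot of the triangularity need to be repaired before this is a proof.
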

Note that the above theorem was proved in \cite{HZH1} by the first author and Jing Zhang in the special case when $W=\Sym_n$ and $\ast=\Id$.

\begin{dfn} We define $$
\A_{\pm 1}:=\Z[u, u^{-1}, (u+1)^{-1}, (u-1)^{-1}] .
$$
\end{dfn}

Let $M_{\A,u}$ be the free $\A$-submodule of $M_u$ generated by $\{a_z\mid z\in \mathbf{I}_\ast\}$. By Lemma \ref{LVInvoModu}, it is clear that $M_{\A,u}$ naturally becomes a left $\HH_{\A,u}$-module. We set $$
M_{\A_{\pm 1},u}:=\A_{\pm 1}\otimes_{\A}M_{\A,u},
\quad
\HH_{\A_{\pm 1},u}:=\A_{\pm 1}\otimes_{\A}\HH_{\A,u},
\quad
\hat{\HH}_{\A_{\pm 1},u}:=\A_{\pm 1}\otimes_{\A}\hat{\HH}_{\A,u}.
$$
For any ring homomorphism $\phi: \A_{\pm 1}\rightarrow K$ with $\lam=\phi(u)\in K\setminus\{0,1,-1\}$, we define
\begin{align*}
M_{\lam}:=& K\otimes_{\A_{\pm 1}}M_{\A_{\pm 1},u},\\
\HH_\lam:=& K\otimes_{\A_{\pm 1}}\HH_{\A_{\pm 1},u}\cong K\otimes_{\A}\HH_{\A,u},\\
\hat{\HH}_\lam:=& K\otimes_{\A_{\pm 1}}\hat{\HH}_{\A_{\pm 1},u}\cong K\otimes_{\A}\hat{\HH}_{\A,u}.
\end{align*}
If $W$ is finite, then $\HH_{\A_{\pm 1},u}=\hat{\HH}_{\A_{\pm 1},u}$, $\HH_u=\hat{\HH}_u$ and $\HH_\lam=\hat{\HH}_\lam$.
Note that $\HH_{\A,u}$ (resp., $\HH_{\A_{\pm 1},u}$) is a free $\A$-module (resp., $\A_{\pm 1}$-module) with basis $\{T_w\mid w\in W\}$. For simplicity, we shall often abbreviate $1_K\otimes_{\A}T_w$ and $1_K\otimes_{\A_{\pm 1}}T_w$  as $T_w$.

By some calculations in small ranks, Lusztig has speculated in \cite[\S4.10]{Lu3} that Theorem \ref{iso1} might be extended to the setting of specialized version $\hat{\HH}_\lam$ of $\hat{\HH}_u$ for arbitrary $\lam\in\mathbb{C}\setminus\{0,1,-1\}$ when $W$ is finite. Therefore, it is natural to make the following conjecture.

\begin{conj} \label{conj1} Let $K$ be a field and $\lam\in K\setminus\{0,1,-1\}$. Let $(W,S)$ be an arbitrary Coxeter system.

(1) The map $\mu$ restricts to a left $\HH_{\A_{\pm 1},u}$-module isomorphism $M_{\A_{\pm 1},u}\cong\HH_{\A_{\pm 1},u}X_{\emptyset}$.

(2) For any ring homomorphism $\phi: \A_{\pm 1}\rightarrow K$ with $\lam=\phi(u)$, the map which sends $1_K\otimes_{\A_{\pm 1}}a_1$ to $1_K\otimes_{\A_{\pm 1}}X_{\emptyset}$ can be extended uniquely to a well-defined left $\HH_\lam$-module isomorphism $M_\lam\cong\HH_\lam(1_K\otimes_{\A_{\pm 1}} X_\emptyset)$.

(3) For any ring homomorphism $\phi: \A_{\pm 1}\rightarrow K$ with $\lam=\phi(u)$, the canonical map
$$\begin{aligned}
\iota_K:\,\,K\otimes_{\A_{\pm 1}}\HH_{\A_{\pm 1},u} X_\emptyset& \rightarrow\HH_\lam (1_K\otimes_{\A_{\pm 1}}X_{\emptyset})\\
r\otimes_{\A_{\pm 1}} hX_\emptyset &\mapsto (r\otimes_{\A_{\pm 1}} h)(1_K\otimes_{\A_{\pm 1}}X_{\emptyset}),\quad\,\forall\, r\in K, h\in\HH_{\A_{\pm 1},u} ,
\end{aligned}
$$
is a left $\HH_\lam$-module isomorphism.

(4) If $W$ is finite, then $\HH_{\A_{\pm 1},u}X_{\emptyset}$ is a pure and free $\A_{\pm 1}$-submodule of ${\HH}_{\A_{\pm 1},u}$.

\end{conj}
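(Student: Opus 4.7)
The plan is to lift Theorem \ref{iso1} to the integral form over $\A_{\pm 1}$ and then push it through an arbitrary specialization $u \mapsto \lam$. Part~(1) is a purely algebraic reduction; Parts~(2) and~(3) both reduce to the injectivity of a single specialization map, which will follow from a Bruhat-triangularity statement for the elements $\mu(a_z)$; Part~(4) is then an $\A_{\pm 1}$-integral refinement of the same triangularity.

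For Part~(1), I would prove by induction on $\ell(z)$ that $a_z \in \HH_{\A_{\pm 1},u}\,a_1$ for every $z \in \mathbf{I}_\ast$. Given such $z$ with $\ell(z) > 0$, pick $s \in S$ with $sz < z$; then either (a)~$sz = zs^\ast$, in which case $w := sz \in \mathbf{I}_\ast$ and Case~1 of Lemma~\ref{LVInvoModu} inverts to $a_z = (u+1)^{-1}(T_s - u)\,a_w$; or (b)~$sz \neq zs^\ast$, in which case (using the standard fact $\ell(szs^\ast) = \ell(z)-2$) $w := szs^\ast \in \mathbf{I}_\ast$ and Case~3 of Lemma~\ref{LVInvoModu} gives $a_z = T_s\,a_w$ directly. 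Only the denominator $(u+1)^{-1} \in \A_{\pm 1}$ is used, so $M_{\A_{\pm 1},u} = \HH_{\A_{\pm 1},u}\,a_1$ and therefore $\mu(M_{\A_{\pm 1},u}) = \HH_{\A_{\pm 1},u}\,X_\emptyset$. Injectivity of $\mu$ on $M_{\A_{\pm 1},u}$ is inherited from Theorem~\ref{iso1} via $M_{\A_{\pm 1},u} \hookrightarrow M_u$.

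For Parts~(2), (3), and (4), I would next establish a triangularity statement: writing
$$\mu(a_z) \;=\; \sum_{y\in W} p_{z,y}(u)\,T_y \;\in\; \hat\HH_{\A_{\pm 1},u},$$
the diagonal coefficient $p_{z,z}(u)$ is a Laurent monomial in $u$, $u+1$, $u-1$, hence a unit of $\A_{\pm 1}$; moreover $p_{z,z'}(u) = 0$ whenever $z,z' \in \mathbf{I}_\ast$ are distinct and $z'$ fails an appropriate Bruhat-order condition relative to $z$ (so that the $\mathbf{I}_\ast \times \mathbf{I}_\ast$ coefficient submatrix $A = (p_{z,z'})$ is triangular with unit diagonal). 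The composite
$$\mu^\lam\,:\,M_\lam \xrightarrow[\ \cong\ ]{\mathrm{id}_K \otimes \mu} K \otimes_{\A_{\pm 1}} \HH_{\A_{\pm 1},u}\,X_\emptyset \xrightarrow{\iota_K} \hat\HH_\lam$$
sends $1_K \otimes a_z$ to $\overline{\mu(a_z)}$, and because each $\overline{p_{z,z}(u)} \neq 0$ whenever $\lam \notin \{0,\pm 1\}$, the specialized submatrix $\overline{A}$ remains invertible over $K$. This forces $\{\overline{\mu(a_z)}\}_{z\in\mathbf{I}_\ast}$ to be $K$-linearly independent in $\hat\HH_\lam$, giving the injectivity of $\mu^\lam$ and hence both~(2) and~(3). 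When $W$ is finite, the full transition matrix from $\{\mu(a_z) : z \in \mathbf{I}_\ast\} \cup \{T_w : w \in W \setminus \mathbf{I}_\ast\}$ to $\{T_w\}_{w \in W}$ takes block form $\bigl(\begin{smallmatrix} A & B \\ 0 & I \end{smallmatrix}\bigr)$ and is therefore in $\mathrm{GL}(\A_{\pm 1})$, exhibiting $\HH_{\A_{\pm 1},u}\,X_\emptyset$ as an $\A_{\pm 1}$-direct summand of $\HH_{\A_{\pm 1},u}$ and settling Part~(4).

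The technical heart of the argument is the triangularity statement together with the fact that each $p_{z,z}(u) \in \A_{\pm 1}^{\times}$. Already the toy computation $\mu(a_s) = u^{-1}(u-1)\,T_s$ (with $W$ of rank one and $\ast = \mathrm{id}$) shows why the parameters $\{0,\pm 1\}$ must be excluded: the leading coefficients generically carry factors of $u$, $u+1$ and $u-1$, which are precisely the three inverted elements of $\A_{\pm 1}$. The inductive verification -- in particular in step~(b) above, where one has to expand $T_s\,\mu(a_w)$ using the algebra relations in $\hat\HH_{\A_{\pm 1},u}$ and track the Bruhat-order support of $\mu(a_w)$ -- is where the bulk of the proof effort will be required.
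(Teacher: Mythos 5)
Your argument for Part~(1) is correct and is essentially a repackaging of what the paper does (the paper organizes the same induction as the triangular expansion $T_{\sigma_z}a_1=(u+1)^{\ell^*(z)}a_z+\cdots$ in Proposition~\ref{keyprop1}, which simultaneously yields the new basis $\{T_{\sigma_z}a_1\}$ of $M_{\A_{-1},u}$); only the invertibility of $u+1$ is needed for surjectivity, exactly as you say.

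The heart of Parts~(2)--(4), however, rests on a triangularity claim that is false as stated. You take the ``diagonal'' of the matrix $(p_{z,y})$ to sit at $y=z$, i.e.\ you claim that the coefficient of $T_z$ in $\mu(a_z)$ is a unit of $\A_{\pm 1}$. Already for $W=\Sym_3$, $\ast=\id$ and $z=s_1s_2s_1$ one computes $\mu(a_z)=(1-u^{-1})(T_{s_1s_2}+T_{s_2s_1})+(1-2u^{-2}+u^{-3})T_{s_1s_2s_1}$, and $1-2u^{-2}+u^{-3}=u^{-3}(u-1)(u^2+u-1)$ is \emph{not} a unit of $\A_{\pm 1}$: it vanishes at any $\lam$ with $\lam^2+\lam-1=0$, which is not excluded by $\lam\notin\{0,1,-1\}$. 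Worse, in this example the full $\mathbf{I}_\ast\times\mathbf{I}_\ast$ submatrix $(p_{z,z'})$ is triangular with determinant $(1-u^{-1})^2\,u^{-3}(u-1)(u^2+u-1)$, so your specialized matrix $\overline{A}$ is actually singular at such $\lam$ and the linear-independence argument collapses. Your rank-one sanity check $\mu(a_s)=u^{-1}(u-1)T_s$ is misleading precisely because there $z$ coincides with the correct pivot element. The missing idea is that the pivots must be placed not at $T_z$ but at $T_{\sigma_z}$, where $\sigma_z=s_{i_1}\cdots s_{i_k}$ is the length-$\rho(z)$ prefix of a reduced $\mathbf{I}_\ast$-expression of $z$ (so $\ell(\sigma_z)=\rho(z)=(\ell(z)+\ell^*(z))/2<\ell(z)$ in general); there the coefficient is $\pm(1-u^{-1})^{\ell^*(z)}$, which \emph{is} a unit of $\A_{\pm 1}$, and the matrix $(\widetilde L^{\sigma_{z_i}}_{z_j})$ is triangular with respect to $\rho$. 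Establishing this is the content of Lemma~\ref{truncate}, Definition~\ref{sigmaZ} and Proposition~\ref{keyprop1} in the paper, and it is exactly the step you have deferred to ``the bulk of the proof effort''; without identifying the correct pivot set $\{\sigma_z\}$ that effort cannot succeed, since the statement you propose to verify is not true. The same repair is needed for Part~(4): the block decomposition must be taken with respect to $W=\{\sigma_z\mid z\in\mathbf{I}_\ast\}\sqcup(W\setminus\{\sigma_z\})$, not $\mathbf{I}_\ast\sqcup(W\setminus\mathbf{I}_\ast)$.
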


The purpose of this paper is to give a proof of the above conjecture and thus give a positive answer to Lusztig's speculation. As an application of our main result, we obtain a new integral basis for the module $M_u$ and for the module $\HH_{\A_{\pm 1},u} X_\emptyset$, see Corollary \ref{Aneg1basis} and Corollary \ref{newBasis2}.

\bigskip

\section{Proof of Conjecture \ref{conj1}}

The purpose of this section is to give a proof of Conjecture \ref{conj1}.

\begin{dfn}[\cite{Hu2}] \label{twistedinvolutions2} For any $w\in \mathbf{I}_\ast$ and $s\in S$, we define $$
s\ltimes w:=\begin{cases} sw &\text{if $sw=ws^\ast$;}\\
sws^\ast &\text{if $sw\neq ws^\ast$.}
\end{cases}
$$
For any $w\in \mathbf{I}_\ast$ and $s_{i_1},\cdots,s_{i_k}\in S$, we define $$
s_{i_1}\ltimes s_{i_2}\ltimes\cdots\ltimes s_{i_k}\ltimes w:=s_{i_1}\ltimes\bigl(s_{i_2}\ltimes\cdots\ltimes (s_{i_k}\ltimes w)\cdots\bigr) .
$$
\end{dfn}
It is clear that $s\ltimes w\in \mathbf{I}_\ast$ whenever $w\in \mathbf{I}_\ast$ and $s\in S$. Furthermore, $\ltimes$ is in general not associative.

\begin{dfn}[\cite{HMP2, Hu2}] \label{Ireduced} Let $w\in \mathbf{I}_\ast$. If $w=s_{i_1}\ltimes s_{i_2}\ltimes\cdots\ltimes s_{i_k}\ltimes 1$, where $k\in\N$, $s_{i_j}\in S$ for each $j$, then $(s_{i_1},\cdots,s_{i_k})$ is called an $\mathbf{I}_\ast$-expression for $w$. Such an $\mathbf{I}_\ast$-expression for $w$ is reduced if its length $k$ is minimal.
\end{dfn}
We regard the empty sequence $()$ as a reduced $\mathbf{I}_\ast$-expression for $w=1$. Let ``$\leq$" be the Bruhat partial ordering on $W$ defined with respect to $S$ (cf. \cite{Hum}). We write $u<w$ if $u\leq w$ and $u\neq w$.
It follows by induction on $\ell(w)$ that every element $w\in \mathbf{I}_\ast$ has a reduced $\mathbf{I}_\ast$-expression.

\begin{lem}[\cite{Hu2, Hu3}] \label{rankfunc} Let $w\in \mathbf{I}_\ast$. Any reduced $\mathbf{I}_\ast$-expression for $w$ has a common length. Let $\rho: \mathbf{I}_\ast\rightarrow\N$ be the map which assigns $w\in \mathbf{I}_\ast$ to this common length. Then $(\mathbf{I}_\ast,\leq)$ is a graded poset with rank function $\rho$. Moreover, if $s\in S$ then $\rho(s\ltimes w)=\rho(w)\pm 1$, and $\rho(s\ltimes w)=\rho(w)-1$ if and only if $\ell(sw)=\ell(w)-1$.
\end{lem}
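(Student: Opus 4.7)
The plan is to adapt the classical Matsumoto exchange argument to the $\ltimes$-operation on $\mathbf{I}_\ast$. Tentatively define $\rho(w)$ to be the minimum length of any $\mathbf{I}_\ast$-expression for $w\in\mathbf{I}_\ast$, which is well-defined by Definition \ref{Ireduced}. The goal is then to prove that every reduced expression attains this minimum, that $\rho$ endows $(\mathbf{I}_\ast,\le)$ with the structure of a graded poset, and that $\rho$ changes by $\pm 1$ under each $\ltimes$-move with the advertised sign.

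The central tool is an Exchange Lemma for $\mathbf{I}_\ast$-expressions: if $(s_{i_1},\dots,s_{i_k})$ is any $\mathbf{I}_\ast$-expression for $w$ and $s\in S$ satisfies $\ell(sw)<\ell(w)$, then
$$
s\ltimes w \;=\; s_{i_1}\ltimes\cdots\ltimes\widehat{s_{i_j}}\ltimes\cdots\ltimes s_{i_k}\ltimes 1
$$
for some index $j$. I would prove this by induction on $k$, setting $w':=s_{i_2}\ltimes\cdots\ltimes s_{i_k}\ltimes 1$ and splitting according to the two cases of Definition \ref{twistedinvolutions2} for how $s_{i_1}\ltimes$ acts on $w'$, each further split by whether $s$ is a left descent of $w'$. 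A key input, used throughout, is the identity $\mathcal{D}_L(w)=\{s:s^\ast\in\mathcal{D}_R(w)\}$ for $w\in\mathbf{I}_\ast$, which follows from $w^\ast=w^{-1}$; this allows one to convert left-descent information on $w$ into right-descent information and apply the classical Strong Exchange Condition of $W$ to a reduced expression obtained by expanding each $\ltimes$-move in the given $\mathbf{I}_\ast$-expression.

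From the Exchange Lemma the Matsumoto-type statement follows by a standard argument: two reduced $\mathbf{I}_\ast$-expressions for $w$ can be mutually exchanged and hence share a common length, which is $\rho(w)$. The claims about $\rho(s\ltimes w)$ then follow quickly: if $\ell(sw)<\ell(w)$, the Exchange Lemma applied to a reduced expression for $w$ directly produces an expression for $s\ltimes w$ of length $\rho(w)-1$; if $\ell(sw)>\ell(w)$, one checks (using the easy identity $s\ltimes(s\ltimes w)=w$ for $s\in S$, $w\in\mathbf{I}_\ast$, verified case by case from Definition \ref{twistedinvolutions2}) that prepending $s$ to a reduced expression for $w$ yields a reduced $\mathbf{I}_\ast$-expression for $s\ltimes w$, of length $\rho(w)+1$. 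For the graded poset claim I would induct on $\ell(w)$: given a Bruhat cover $u\lessdot w$ in $\mathbf{I}_\ast$, pick $s\in\mathcal{D}_L(w)$ and use the classical lifting property of Bruhat order in $W$ to pass to a cover of strictly smaller ordinary length, to which the inductive hypothesis applies.

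The principal obstacle will be the case analysis in the Exchange Lemma. The $\ltimes$-operation can act either as left multiplication by $s_{i_1}$ or as twisted conjugation $w'\mapsto s_{i_1}w'(s_{i_1})^\ast$, and one must simultaneously track, at each step, whether the move is ascending or descending in the ordinary length $\ell$ of $W$; translating all this information into the omission of a specific position in the original expression requires delicate bookkeeping, particularly in the degenerate subcase where $\ell(s_{i_1}w'(s_{i_1})^\ast)=\ell(w')$ while $s_{i_1}w'\ne w'(s_{i_1})^\ast$.
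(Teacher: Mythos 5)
This lemma is not proved in the paper at all: it is imported verbatim from Hultman's work \cite{Hu2, Hu3}, so there is no internal proof to compare your argument against. Your outline is, in fact, a reconstruction of Hultman's actual strategy: a twisted exchange property for $\mathbf{I}_\ast$-expressions deduced from the strong exchange condition of $W$ applied to the reduced word obtained by unfolding each $\ltimes$-step (this is exactly the expansion recorded in Lemma \ref{truncate}), the involutivity $s\ltimes(s\ltimes w)=w$, the descent identity $\ell(sw)<\ell(w)\Leftrightarrow\ell(ws^\ast)<\ell(w)$ for $w\in\mathbf{I}_\ast$, and a twisted lifting property for the gradedness of $(\mathbf{I}_\ast,\le)$. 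All of these ingredients are correct. Note also that the first assertion of the lemma is a tautology given Definition \ref{Ireduced} (reduced means minimal length), so the real content is the exchange/gradedness part.

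That said, the two hardest steps are precisely where your sketch is thinnest. First, in the Exchange Lemma, the genuine difficulty is not just "tracking ascents and descents'': after deleting $s_{i_j}$ from $(s_{i_1},\dots,s_{i_k})$, the partial products $s_{i_a}\ltimes\cdots\ltimes\widehat{s_{i_j}}\ltimes\cdots\ltimes s_{i_k}\ltimes 1$ for $a<j$ differ from the original $z_a$, so whether each later $\ltimes$ acts by multiplication or by twisted conjugation can change; a naive induction on $k$ with a fixed case split does not close by itself, and one must argue that deleting the letter of the expanded reduced word at position $p$ (or at its $\ast$-mirror position in the tail) corresponds to deleting a single well-defined $\ltimes$-letter. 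Second, for gradedness you invoke "the classical lifting property'' in one sentence, but what is actually needed is its twisted analogue (if $u\le w$ in $\mathbf{I}_\ast$ and $s\ltimes w<w$, then $s\ltimes u\le w$, and $u\le s\ltimes w$ or $s\ltimes u\le s\ltimes w$ according to the sign of $\rho(s\ltimes u)-\rho(u)$), which requires applying the classical property on both sides in the conjugation case and then an induction establishing both that $\rho$ is strictly order-preserving and that comparable elements with $\rho$-difference at least $2$ admit an intermediate element. These gaps are fillable --- the statement is a theorem of Hultman --- but as written your proposal defers exactly the parts that constitute the proof.
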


\begin{cor}[{\cite[Corollary 2.6]{HZH1}}] \label{length0} Let $w\in \mathbf{I}_\ast$ and $s\in S$. Suppose that $sw\neq ws^\ast$. Then $\ell(sw)=\ell(w)+1$ if and only if $\ell(ws^\ast)=\ell(w)+1$, and if and only if $\ell(s\ltimes w)=\ell(w)+2$. The same is true if we replace ``$+$" by ``$-$".
\end{cor}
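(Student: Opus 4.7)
The plan is to decouple the corollary into two pieces: first establishing that $\ell(sw) = \ell(ws^\ast)$ for all $w \in \mathbf{I}_\ast$, which handles the equivalence of the two ``one-sided'' length conditions, and then relating these to $\ell(s\ltimes w)$ via a standard Coxeter lifting argument.

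For the first piece, since $\ast$ is a length-preserving automorphism of $W$ and $w^\ast = w^{-1}$, I would compute
$$\ell(sw) = \ell\bigl((sw)^\ast\bigr) = \ell(s^\ast w^\ast) = \ell(s^\ast w^{-1}) = \ell\bigl((s^\ast w^{-1})^{-1}\bigr) = \ell(ws^\ast),$$
using the invariance of $\ell$ under inversion and $(s^\ast)^{-1} = s^\ast$. Both the ``$+$'' and ``$-$'' versions of the equivalence between $\ell(sw)=\ell(w)\pm 1$ and $\ell(ws^\ast)=\ell(w)\pm 1$ are then immediate.

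For the second piece, the hypothesis $sw \neq ws^\ast$ gives $s \ltimes w = sws^\ast$ by definition. I would then apply the following well-known dichotomy (a direct consequence of the strong exchange condition in Coxeter groups): for $w \in W$ and $s, t \in S$ satisfying $\ell(sw) > \ell(w)$ and $\ell(wt) > \ell(w)$, one has either $\ell(swt) = \ell(w)+2$ or $swt = w$. Applied with $t = s^\ast$, the second alternative reads $sw = ws^\ast$, which is excluded by hypothesis, so $\ell(sws^\ast) = \ell(w)+2$. The converse direction is immediate from $\ell(sws^\ast) \le \ell(sw)+1$ together with $\ell(sw) \in \{\ell(w)-1, \ell(w)+1\}$. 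The ``$-$'' version follows symmetrically, either by a parallel argument or by replacing $w$ with $s \ltimes w$ (which also lies in $\mathbf{I}_\ast$) and invoking the ``$+$'' case.

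The main obstacle, if there is one, is quoting or reproving the lifting dichotomy cleanly. The short proof I have in mind is to apply strong exchange to the reduced expression $s \cdot (\text{reduced expression of } w)$ of $sw$: if $t$ were a right descent of $sw$, then the simple reflection to be deleted must be the leading $s$, because any internal deletion would produce an expression for $wt$ of length $\ell(w)-1$, contradicting $\ell(wt) = \ell(w)+1$. Deleting $s$ then yields $swt = w$. Apart from this mini-lemma, the argument is essentially automatic.
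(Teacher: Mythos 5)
The paper does not prove this corollary — it is quoted from \cite[Corollary 2.6]{HZH1} — so there is no in-text argument to compare against. Your proof is correct and is the standard one: the identity $\ell(sw)=\ell(ws^\ast)$ follows exactly as you say from $\ast$ being a length-preserving automorphism together with $\ell(x)=\ell(x^{-1})$, and the equivalence with $\ell(sws^\ast)=\ell(w)\pm 2$ is the usual lifting dichotomy, whose exchange-condition proof you sketch correctly (including the exclusion of the degenerate alternative $sws^\ast=w$ by the hypothesis $sw\neq ws^\ast$).
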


\begin{lem} \label{truncate} Let $z\in \mathbf{I}_*$. Let $(s_{i_1},\cdots,s_{i_k})$
(where $k\in \mathbb{N}$) be an arbitrary reduced $\mathbf{I}_*$-expression of $z$.
Then there exist $s_{i_{k+1}},s_{i_{k+2}},\dots,s_{i_r}\in S$ and integers ${k}\leq t_k\leq t_{k-1}\leq\dots\leq t_1=r$ such that $r=\ell(z)$, and for each $1\leq a\leq k$,
\begin{align*}
z_a:=s_{i_a}\ltimes s_{i_{a+1}}\ltimes\cdots \ltimes s_{i_k}\ltimes 1=s_{i_a}s_{i_{a+1}}\dots s_{i_k}s_{i_{k+1}}s_{i_{k+2}}\cdots s_{i_{t_a}},
\end{align*}
$\rho(z_a)=k-a+1$, $\ell(z_a)=t_a-a+1$. In particular, $s_{i_a}s_{i_{a+1}}\dots s_{i_{t_a}}$ is a reduced expression of $z_a$, $s_{i_a}\cdots s_{i_k}$ is a reduced expression and $(i_{k+1},i_{k+2},\dots,i_r)$ is uniquely determined by the reduced $\mathbf{I}_*$-expression $(s_{i_1},\cdots,s_{i_k})$.
\end{lem}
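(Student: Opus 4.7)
The plan is to prove this by induction on $k = \rho(z)$. The base case $k = 0$ is immediate: then $z = 1$, and taking $r = 0$ with the empty extension trivially satisfies all the claims.

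For the inductive step, assume $k \geq 1$ and set $z' := s_{i_2} \ltimes \cdots \ltimes s_{i_k} \ltimes 1$, so that $z = s_{i_1} \ltimes z'$. First I would verify that $(s_{i_2},\dots,s_{i_k})$ is a reduced $\mathbf{I}_\ast$-expression for $z'$: it is certainly an $\mathbf{I}_\ast$-expression, so $\rho(z') \leq k-1$, while Lemma~\ref{rankfunc} gives $\rho(z) - \rho(z') = \pm 1$, and the reducedness of $(s_{i_1},\dots,s_{i_k})$ for $z$ forces $\rho(z) = k$ and hence $\rho(z') = k-1$. Applying the inductive hypothesis to $z'$ supplies simple reflections $s_{i_{k+1}},\dots, s_{i_{r'+1}} \in S$ (after a relabelling of indices) together with integers $k-1 \leq t'_{k-1} \leq \cdots \leq t'_1 = r' = \ell(z')$ satisfying the analogous conclusions for $z'_b := s_{i_{b+1}} \ltimes \cdots \ltimes s_{i_k} \ltimes 1 = z_{b+1}$. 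Setting $t_b := t'_{b-1} + 1$ for $2 \leq b \leq k$ transfers all those statements to $z_2,\dots,z_k$, and the chain $t_k \leq \cdots \leq t_2$ follows directly from its counterpart among the $t'_b$.

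The crux is the case split at $a = 1$. If $s_{i_1} z' = z' s_{i_1}^\ast$, then $z = s_{i_1} z'$, and $\rho(z) = \rho(z') + 1$ via Lemma~\ref{rankfunc} gives $\ell(z) = \ell(z') + 1$; I set $r := r'+1$ and $t_1 := r$, so no new reflection is appended and the concatenation $s_{i_1} s_{i_2}\cdots s_{i_{t_1}}$ is reduced by length count. If instead $s_{i_1} z' \neq z' s_{i_1}^\ast$, then $z = s_{i_1} z' s_{i_1}^\ast$, and Corollary~\ref{length0} combined with the equality $\ell(s_{i_1}z')=\ell(z')+1$ (again from Lemma~\ref{rankfunc}) yields $\ell(z) = \ell(z') + 2$; I append $s_{i_{r'+2}} := s_{i_1}^\ast \in S$ (using $S^\ast = S$) and set $r := r'+2$, $t_1 := r$, which again produces a reduced ordinary expression of $z$ by length count. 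In both cases $t_2 \leq t_1$, completing the monotone chain.

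Uniqueness of $(i_{k+1},\dots,i_r)$ is then built into the construction: at each recursive layer the appended simple reflection (when one is added) is forced to equal the $\ast$-image of the first entry of the then-current reduced $\mathbf{I}_\ast$-expression, so the whole extension is determined by $(s_{i_1},\dots,s_{i_k})$. The delicate point to watch is the interplay between the $\mathbf{I}_\ast$-length $\rho$ and the ordinary Coxeter length $\ell$ as one traverses the two cases of $\ltimes$; once Lemma~\ref{rankfunc} and Corollary~\ref{length0} are deployed to convert the $\rho$-data coming from the recursive structure of $\ltimes$ into $\ell$-data for the ordinary expression, the verification becomes mechanical.
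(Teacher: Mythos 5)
Your proof is correct and follows exactly the route the paper indicates — induction on $k$ using Lemma~\ref{rankfunc} to pin down $\rho(z')=k-1$ and the length increments, and Corollary~\ref{length0} to handle the case $s_{i_1}z'\neq z's_{i_1}^\ast$ (the paper's own proof is just a one-line citation of these two results plus induction). Your write-up supplies the details the paper omits, and they check out.
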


\begin{proof} This follows from Lemma \ref{rankfunc} and Corollary \ref{length0} and an induction on $k$.
\end{proof}

\begin{dfn} \label{sigmaZ} For each $z\in \mathbf{I}_\ast$ and each reduced $\mathbf{I}_\ast$-expression $\sigma=(s_{i_1},\cdots,s_{i_k})$ of $z$, we define \begin{equation}
\sigma_z:=s_{i_1}\dots s_{i_k}\in W.
\end{equation}
\end{dfn}
In particular, we have $\sigma_1=1$ and $\rho(z)=\ell(\sigma_z)$ for any $z\in \mathbf{I}_\ast$. In general, $\sigma_z$ depends on the choice of the reduced $\mathbf{I}_\ast$-expression $(s_{i_1},\cdots,s_{i_k})$ of $z$.

\begin{dfn} [{\cite[Proposition 2.5]{Hu3}}, {\cite[Proposition 2.2]{Mar}}] Let $w\in \mathbf{I}_\ast$ and $(s_{i_1},\cdots,s_{i_k})$ be a reduced $\mathbf{I}_*$-expression of $w$. We define $$
w_0:=w,\,\, w_t:=s_{i_t}\ltimes w_{t-1},\,\,\text{for $1\leq t\leq k$.}
$$
Define $\ell^*: \mathbf{I}_\ast\rightarrow\N$ by $$
\ell^*(w):=\#\{1\leq t\leq k\mid s_{i_t}w_t=w_ts_{i_t}^\ast\} .
$$
\end{dfn}
The notation $\ell^*(w)$ we used here was denoted by $\ell^\theta(w)$ in \cite{Hu2} and \cite{Hu3}, and was denoted by $\phi$ in \cite[\S1.5]{Lu3}.
By \cite{Hu3}, $\ell^*(w)$ depends only on $w$ but not on the choice of the reduced $\mathbf{I}_\ast$-expression $(s_{i_1},\cdots,s_{i_k})$ of $w$.

\begin{lem}[{\cite[Theorem 4.8]{Hu1}}, {\cite[\S2.2]{Hu3}}]\label{lem0}  Let $w\in I_\ast$. Then $\rho(w)=(\ell(w)+\ell^*(w))/2$.
\end{lem}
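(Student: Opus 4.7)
The plan is induction on $k:=\rho(w)$, using an arbitrary reduced $\mathbf{I}_*$-expression $(s_{i_1},\dots,s_{i_k})$ of $w$. The base case $k=0$ forces $w=1$ and reduces to the trivial identity $0=(0+0)/2$.

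For the inductive step, I set $w':=s_{i_2}\ltimes\cdots\ltimes s_{i_k}\ltimes 1$, so that $(s_{i_2},\dots,s_{i_k})$ is a reduced $\mathbf{I}_*$-expression for $w'$ with $\rho(w')=k-1$, and $w=s_{i_1}\ltimes w'$. The reducedness of the longer expression combined with Lemma \ref{rankfunc} forces $\rho(w)=\rho(w')+1$, which by the same lemma means $\ell(s_{i_1}w')=\ell(w')+1$. I then split into the two defining cases of $\ltimes$.

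If $s_{i_1}w'=w's_{i_1}^*$ (commuting case), then $w=s_{i_1}w'$ and hence $\ell(w)=\ell(w')+1$. Unwinding the definition of the sequence $w_0,w_1,\dots$ attached to the reduced expression for $w$ gives $w_1=w'$, and the index $t=1$ satisfies $s_{i_1}w_1=w_1 s_{i_1}^*$; for $t\geq 2$ the defining sequence coincides with the one produced by the reduced expression of $w'$. Hence $\ell^*(w)=\ell^*(w')+1$. If instead $s_{i_1}w'\neq w's_{i_1}^*$, then $w=s_{i_1}w's_{i_1}^*$, and Corollary \ref{length0} combined with $\ell(s_{i_1}w')=\ell(w')+1$ upgrades this to $\ell(w)=\ell(w')+2$. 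The same unwinding still gives $w_1=w'$, but now the first step fails the commuting condition, so $\ell^*(w)=\ell^*(w')$.

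In both cases $\ell(w)+\ell^*(w)=\ell(w')+\ell^*(w')+2$, and combining with $\rho(w)=\rho(w')+1$ and the inductive hypothesis closes the induction. The only subtle point worth flagging is the implicit assertion that $\ell^*(w)$ is independent of the choice of reduced $\mathbf{I}_*$-expression; this is already available from prior work cited in the excerpt, but in any case the identity $\ell^*(w)=2\rho(w)-\ell(w)$ extracted from the proof itself manifestly depends only on $w$, so independence is a byproduct rather than a prerequisite.
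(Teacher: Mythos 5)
Your proof is correct. Note that the paper itself offers no proof of Lemma \ref{lem0} --- it is quoted from Hultman's work (\cite[Theorem 4.8]{Hu1}, \cite[\S2.2]{Hu3}) --- so there is nothing internal to compare against; what you have written is the natural self-contained induction. The key steps all check out: $(s_{i_2},\dots,s_{i_k})$ is forced to be reduced for $w'$ (else prepending $s_{i_1}$ would shorten the expression for $w$), Lemma \ref{rankfunc} converts $\rho(w)=\rho(w')+1$ into $\ell(s_{i_1}w')=\ell(w')+1$, and the identity $s_{i_1}\ltimes(s_{i_1}\ltimes v)=v$ (worth stating explicitly, since it is what makes $w_1=w'$ in the paper's defining sequence $w_0=w$, $w_t=s_{i_t}\ltimes w_{t-1}$) aligns the $\ell^*$-count for $w$ with that for $w'$ shifted by the first step. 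Your closing remark is also right and worth keeping: since the argument yields $\ell^*(w)=2\rho(w)-\ell(w)$ for an arbitrary reduced $\mathbf{I}_*$-expression, the independence of $\ell^*$ from the choice of expression (which the paper also only cites from \cite{Hu3}) falls out as a corollary rather than being an input, so there is no circularity in the induction.
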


Let $\bar{}: \A\rightarrow\A$ be the ring involution such that $\overline{u^n}=(-u)^{-n}$ for any $n\in\Z$. Let $\epsilon: \mathbf{I}_\ast\rightarrow\{1,-1\}$,
$z\mapsto (-1)^{\rho(z)}$, $\forall\,z\in\mathbf{I}_\ast$. By Lemma \ref{lem0}, our $\epsilon$ coincides with the function $\epsilon$ defined in \cite[\S1.5]{Lu3}.

\begin{dfn}[{\cite[\S1.1]{Lu3}}] \label{Stru1} Let $\{L_z^x \mid z\in \mathbf{I}_\ast, x\in W\}$ be a set of uniquely determined polynomials in $\Z[u]$ such that $$
T_xa_1=\sum_{z\in\mathbf{I}_\ast}L_z^x a_z , \quad\,\forall\, x\in W .
$$
\end{dfn}

\begin{dfn}[{\cite[\S1.6]{Lu3}}] \label{Stru2} For $x\in W, z\in\mathbf{I}_\ast$, we set $$
\widetilde{L}^x_z:=(-1)^{\ell(x)}\eps(z)\overline{L^x_z}.
$$
\end{dfn}

\begin{lem}[{\cite[\S1.7, the 5th line above \S1.8]{Lu3}}] \label{keylem1} For $z\in\mathbf{I}_\ast$, we have $$
\mu(a_z)=\sum_{x\in W}\widetilde{L}_z^x T_x .
$$
\end{lem}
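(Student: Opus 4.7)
The plan is to define an auxiliary map $\nu\colon M_u \to \hat{\HH}_u$ by $\nu(a_z) := \sum_{x \in W} \widetilde{L}_z^x T_x$, and to show $\nu = \mu$. By Theorem \ref{iso1}, $\mu$ is uniquely determined as the left $\HH_u$-module homomorphism sending $a_1$ to $X_\emptyset$; hence it suffices to check (a) $\nu(a_1) = X_\emptyset$ and (b) $\nu(T_s a_z) = T_s \nu(a_z)$ for every $s \in S$ and every $z \in \mathbf{I}_\ast$.

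For (a), by Definition \ref{Stru2} the claim $\nu(a_1) = X_\emptyset$ is equivalent to $L_1^x = u^{\ell(x)}$ when $x^\ast = x$ and $L_1^x = 0$ otherwise. I would prove this by induction on $\ell(x)$ via the recursion $T_{sy} a_1 = T_s(T_y a_1) = \sum_z L_z^y\, T_s a_z$ (valid whenever $\ell(sy) = \ell(y)+1$), expanding each $T_s a_z$ using Lemma \ref{LVInvoModu} and reading off the coefficient of $a_1$. A short case analysis shows that only $z \in \{1, s, ss^\ast\}$ can contribute, giving the recurrences $L_1^{sy} = uL_1^y + (u^2-u)L_s^y$ when $s = s^\ast$, and $L_1^{sy} = u^2 L_{ss^\ast}^y$ when $s \neq s^\ast$. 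Closing the induction requires a supplementary vanishing statement for $L_s^y$ and $L_{ss^\ast}^y$, which can itself be established by a parallel induction using Lemma \ref{LVInvoModu}.

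For (b), fix $y \in W$. The $T_y$-coefficient of $T_s \nu(a_z) = \sum_x \widetilde{L}_z^x\, T_s T_x$ can be computed from the quadratic relation $(T_s+1)(T_s-u^2)=0$: it equals $u^2 \widetilde{L}_z^{sy}$ when $\ell(sy) > \ell(y)$, and $\widetilde{L}_z^{sy} + (u^2-1)\widetilde{L}_z^y$ when $\ell(sy) < \ell(y)$. Separately, $\nu(T_s a_z)$ is computed by applying $\nu$ to the appropriate case of Lemma \ref{LVInvoModu}, producing a specific $\A$-combination of $\widetilde{L}_z^y$ and $\widetilde{L}_{s \ltimes z}^y$. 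Equating the two expressions and undoing the bar-and-sign twist of Definition \ref{Stru2} reduces (b) to a family of recursive identities on the $L_z^x$, which in turn follow from the defining relation $T_x a_1 = \sum_{z'} L_{z'}^x a_{z'}$ together with either $T_{sx} = T_s T_x$ (when $\ell(sx) > \ell(x)$) or the Hecke recursion for $T_s T_x$ (when $\ell(sx) < \ell(x)$), after expanding the right-hand sides via Lemma \ref{LVInvoModu} and comparing coefficients of each $a_{z'}$.

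The main obstacle is the sign-and-power bookkeeping that arises from interleaving the bar involution $u \mapsto -u^{-1}$, the sign $\eps(z) = (-1)^{\rho(z)}$, and the factor $(-1)^{\ell(x)}$ inside $\widetilde{L}_z^x$. When $z$ is replaced by $s \ltimes z$, Lemma \ref{rankfunc} implies that $\rho(z)$ (and hence $\eps(z)$) flips, while on the Hecke-algebra side the quadratic relation produces its own $u$-powers and parity changes. One must verify, using Lemma \ref{rankfunc} and Corollary \ref{length0}, that these sign and power changes cancel consistently across all four cases of Lemma \ref{LVInvoModu}. Once this is done, both (a) and (b) hold, and the Lemma follows from the uniqueness assertion in Theorem \ref{iso1}.
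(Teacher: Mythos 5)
The paper does not actually prove this lemma: it is quoted (modulo a typo the authors correct) from \cite[\S1.7]{Lu3}, so there is no internal proof to compare against. Your reconstruction follows Lusztig's own route — set $X_z:=\sum_x\widetilde{L}_z^xT_x$, verify compatibility with the $T_s$-action, identify $X_1$ with $X_\emptyset$, and invoke the uniqueness in Theorem \ref{iso1}. Your part (b) is sound: for example, when $sz=zs^*>z$ and $\ell(sx)>\ell(x)$ one reads off from Lemma \ref{LVInvoModu} the recursion $L_z^{sx}=uL_z^x+(u^2-u)L_{sz}^x$, and applying $p\mapsto(-1)^{\ell(\cdot)}\eps(\cdot)\overline{p}$ together with $\eps(sz)=-\eps(z)$ (Lemma \ref{rankfunc}) turns this into $u^2\widetilde{L}_z^{sx}=u\widetilde{L}_z^x+(u+1)\widetilde{L}_{sz}^x$, which is exactly the identity demanded by the quadratic relation on the Hecke side; the other three cases are analogous. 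This part is routine, if tedious.

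The genuine gap is in part (a). First, the auxiliary fact you need is not a pure vanishing statement. Matching your recurrences against the target $L_1^x=u^{\ell(x)}\delta_{x^*,x}$ shows you need $L_s^y=0$ whenever $\ell(sy)>\ell(y)$ and $s=s^*$, but in the twisted case $s\neq s^*$ you need the exact value $L_{ss^*}^y=u^{\ell(y)-1}$ when $y^*=s^*sy$ (and $0$ otherwise). This value is genuinely nonzero: for $W=\Sym_3$ with $*$ interchanging $s_1,s_2$ one computes $T_{s_2s_1}a_1=ua_{s_1s_2}+(u+1)a_{s_1s_2s_1}$, so $L_{s_1s_2}^{s_2s_1}=u$. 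Second, and more seriously, the proposed ``parallel induction'' does not close: to propagate $L_s^{\,\cdot}$ from $y'$ to $s'y'$ one must know the coefficient $L_{s'\ltimes s}^{y'}$, and $\rho(s'\ltimes s)=2$ whenever $s'\ltimes s\neq s,1$; so controlling the rank-$1$ coefficients requires controlling rank-$2$ coefficients, and so on through all ranks. As written, part (a) is therefore not a routine verification: you must either import the identity $L_1^x=u^{\ell(x)}\delta_{x^*,x}$ from \cite{LV1} or \cite{Lu2} (where it is obtained by computing $T_sX_\emptyset$ directly in $\hat{\HH}_u$), or set up a single global induction that simultaneously establishes enough information about all the $L_z^x$. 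With that point repaired, the rest of your argument goes through.
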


Note that there is a typo in the identity on $\mu(a_z)$ in \cite[\S1.7, the 5th line above \S1.8]{Lu3}). The element $T_z$ in the right hand should be replaced by $T_x$.

\begin{prop}\label{keyprop1} Let $z\in \mathbf{I}_\ast$ and $\sigma=(s_{i_1},\cdots,s_{i_k})$ be a reduced $\mathbf{I}_*$-expression of $z$. Let $(i_{k+1},\cdots,i_r)$ be the unique $(r-k)$-tuple determined by this reduced $\mathbf{I}_*$-expression as described in Lemma \ref{truncate}. Then $z=s_{i_1}\ltimes \cdots \ltimes s_{i_k}\ltimes 1
=s_{i_1}\cdots s_{i_k}s_{i_{k+1}}\cdots s_{i_r}$ with $k=\rho(z)$, $r=\ell(z)$, where $s_{i_1},\dots ,s_{i_r}\in S$, and we have
$L_z^{\sigma_z}=(u+1)^{\ell^*(z)}$, and $L_{w}^{\sigma_z}\neq 0$ only if $\rho(w)<\rho(z)$ and there exists a reduced $\mathbf{I}_*$-expression $\sigma'$ of $w$ such that $\sigma'_{w}<\sigma_{z}$.
Moreover, $L_{w}^{\sigma_z}\in u\mathbb{Z}[u]$ if $w\neq z$.
\end{prop}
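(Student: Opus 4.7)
The plan is to prove the proposition by induction on $k=\rho(z)$. The base case $k=0$ (so $z=1$ and $\sigma_z=1$) is immediate from $T_1a_1=a_1=(u+1)^{\ell^*(1)}a_1$. For the inductive step, write $s:=s_{i_1}$ and $\sigma^\flat:=(s_{i_2},\ldots,s_{i_k})$, which is a reduced $\mathbf{I}_\ast$-expression of $z':=s_{i_2}\ltimes\cdots\ltimes s_{i_k}\ltimes 1$, so that $z=s\ltimes z'$ and $\rho(z')=k-1$. Lemma~\ref{truncate} (with $a=1$) makes $s_{i_1}\cdots s_{i_k}$ a reduced expression in $W$; in particular $\sigma_z=s\cdot\sigma^\flat_{z'}$ is length-additive, so $T_{\sigma_z}=T_sT_{\sigma^\flat_{z'}}$. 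Applying the inductive hypothesis to $z'$ with $\sigma^\flat$ yields
\[
T_{\sigma^\flat_{z'}}a_1 \;=\; (u+1)^{\ell^*(z')}a_{z'} \;+\; \sum_{w'\neq z'} L_{w'}^{\sigma^\flat_{z'}}a_{w'},
\]
where each nonzero $L_{w'}^{\sigma^\flat_{z'}}$ lies in $u\mathbb Z[u]$ and some reduced $\mathbf{I}_\ast$-expression $\tilde\tau'$ of $w'$ satisfies $\tilde\tau'_{w'}<\sigma^\flat_{z'}$.

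Applying $T_s$ and expanding every $T_sa_{w'}$ via the four cases of Lemma~\ref{LVInvoModu} produces a combination of $a_{w'}$ and $a_{s\ltimes w'}$ with coefficients in $\mathbb Z[u]$. For $w'\neq z'$ one has $\rho(w')<\rho(z')=\rho(z)-1$, whence $\rho(w'),\rho(s\ltimes w')\leq\rho(z)-1<\rho(z)$, so only $T_sa_{z'}$ can contribute to $a_z$. A case split on whether $sz'=z's^\ast$ -- using $\ell^*(z)=\ell^*(z')+1$ in the commuting case and $\ell^*(z)=\ell^*(z')$ otherwise -- then yields $L_z^{\sigma_z}=(u+1)^{\ell^*(z)}$ uniformly. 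For $w\neq z$, every contribution to $L_w^{\sigma_z}$ is either $u(u+1)^{\ell^*(z')}$ (from the commuting case of $T_sa_{z'}$, feeding $a_{z'}$) or a product $L_{w'}^{\sigma^\flat_{z'}}\cdot c$ with $c\in\mathbb Z[u]$; since $L_{w'}^{\sigma^\flat_{z'}}\in u\mathbb Z[u]$ by induction, the total lies in $u\mathbb Z[u]$.

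For each $w\neq z$ with $L_w^{\sigma_z}\neq 0$ I must produce a reduced $\mathbf{I}_\ast$-expression $\sigma'$ of $w$ with $\sigma'_w<\sigma_z$. The sources of contribution fall into four cases: (I) commuting case of $T_sa_{z'}$ with $w=z'$: take $\sigma'=\sigma^\flat$, since $\sigma^\flat_{z'}$ is a proper subword of the reduced expression $\sigma_z$; (II) $T_sa_{w'}$ feeding $a_{w'}$ with $w=w'\neq z'$: take $\sigma'=\tilde\tau'$, giving $\tilde\tau'_w<\sigma^\flat_{z'}<\sigma_z$; (III-a) $w=s\ltimes w'$ with $\rho(w)=\rho(w')+1$: take $\sigma'=(s,\tilde\tau')$, so that by Lemma~\ref{truncate} both $s\tilde\tau'_{w'}$ and $s\sigma^\flat_{z'}$ are length-additive and the standard lifting property of the Bruhat order promotes $\tilde\tau'_{w'}<\sigma^\flat_{z'}$ to $\sigma'_w=s\tilde\tau'_{w'}<s\sigma^\flat_{z'}=\sigma_z$; (III-b) $w=s\ltimes w'$ with $\rho(w)=\rho(w')-1$. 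Case~(III-b) is the main obstacle: the $\tilde\tau'$ supplied by induction need not begin with $s$, so it cannot simply be abridged. I would resolve it by invoking the strong exchange property for reduced $\mathbf{I}_\ast$-expressions (a standard ingredient in the theory of twisted involutions, cf.\ \cite{Hu3, HMP2}): since $\rho(s\ltimes w')<\rho(w')$, some index $j$ exists such that deleting the $j$-th entry of $\tilde\tau'$ yields a reduced $\mathbf{I}_\ast$-expression $\sigma'$ of $s\ltimes w'=w$; then $\sigma'_w$ is a proper subword of the reduced expression $\tilde\tau'_{w'}$, so $\sigma'_w<\tilde\tau'_{w'}<\sigma^\flat_{z'}<\sigma_z$, closing the induction.
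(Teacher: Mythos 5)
Your proposal is correct and follows essentially the same route as the paper: induction on $\rho(z)$, peeling off $T_{s_{i_1}}$, and expanding via Lemma~\ref{LVInvoModu}. The only difference is one of detail: where the paper simply asserts that each $T_{s_{i_1}}a_{z''}$ is a $\Z[u]$-combination of $a_w$ with $w\in\mathbf{I}_\ast(\prec_\sigma z)$, you carry out the Bruhat-order bookkeeping explicitly, in particular supplying the lifting property in case (III-a) and the exchange property for reduced $\mathbf{I}_\ast$-expressions (Hultman) in case (III-b), which is exactly what is needed to justify that step.
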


\begin{dfn}\label{newOrder} Let $z\in \mathbf{I}_\ast$ and $\sigma=(s_{i_1},\cdots,s_{i_k})$ be a reduced $\mathbf{I}_*$-expression of $z$. We define $$
\mathbf{I}_\ast({\prec_\sigma z}):=\Bigl\{w\in \mathbf{I}_\ast\Bigm|\begin{matrix}\text{$\rho(w)<\rho(z)$ and there exists a reduced $\mathbf{I}_*$-expression $\sigma'$}\\
\text{of $w$ such that $\sigma'_{w}<\sigma_{z}$.}\end{matrix}\Bigr\}
$$
\end{dfn}

Then Proposition \ref{keyprop1} is equivalent to the following identity: \begin{equation}\label{triangularRelation1}
T_{\sigma_z}a_1=(u+1)^{\ell^*(z)}a_{z}+\sum_{w\in \mathbf{I}_\ast({\prec_\sigma z})}{L_{w}^{\sigma_z}a_{w}},
\qquad
L_{w}^{\sigma_z}\in u\mathbb{Z}[u].
\end{equation}

\smallskip
\noindent
{\textbf{Proof of Proposition \ref{keyprop1}}}:  Let $z\in \mathbf{I}_*$. We prove the proposition by induction on $\rho(z)$. If $\rho(z)=0$,
then $z=1$, $\sigma_z=1$ and $T_1a_1=a_1$.

Let $k\in \mathbb{N}^*$. Suppose that the statement holds when $\rho(z)<k$.
Let $z\in \mathbf{I}_*$ with $\rho(z)=k$. We follow the notation and hypothesis in Lemma \ref{truncate} and Definition \ref{sigmaZ}. Then $z=s_{i_1}\ltimes \cdots \ltimes s_{i_k}\ltimes 1
=s_{i_1}\cdots s_{i_k}s_{i_{k+1}}\cdots s_{i_r}$ with $k=\rho(z)$, $r=\ell(z)$ for some $s_{i_1},\dots ,s_{i_r}\in S$.
By definition, $\sigma_z=s_{i_1}\cdots s_{i_k}$.
Let $x'=s_{i_1}\sigma_z=s_{i_2}\cdots s_{i_k}$. Note that $(s_{i_1},\cdots,s_{i_k})$ is a reduced $\mathbf{I}_*$-expression implies that $\sigma':=(s_{i_2},\cdots,s_{i_k})$ is a reduced $\mathbf{I}_\ast$-expression of
$z':=s_{i_2}\ltimes\cdots\ltimes s_{i_k}\ltimes 1$, then $\rho(z')=k-1$ and $x'=\sigma_{z'}$ in the notation of Definition \ref{sigmaZ}.
Now $x'<\sigma_z$ and
\begin{align*}
T_{\sigma_z}a_1=& T_{s_{i_1}}T_{x'}a_1=T_{s_{i_1}}T_{\sigma_{z'}}a_1\\
=& T_{s_{i_1}}
   \left(
(u+1)^{\ell^*(z')}a_{z'}+\sum_{z''\in \mathbf{I}_\ast(\prec_{\sigma'}z')}{L_{z''}^{x'}a_{z''}}
    \right),
\quad
L_{z''}^{x'}\in u\mathbb{Z}[u],\\
=& (u+1)^{\ell^*(z')}T_{s_{i_1}}a_{z'}
+\sum_{z''\in \mathbf{I}_\ast(\prec_{\sigma'}z')}{L_{z''}^{x}T_{s_{i_1}}a_{z''}} .
\end{align*}

We consider the first term in the above identity. There are two possibilities:

\smallskip
\noindent
{\it Case 1.} If $s_{i_1}{z'}={z'}s_{i_1}^*$,
then $z=s_{i_1}\ltimes {z'}=s_{i_1}{z'}$.
Thus $$
T_{s_{i_1}}a_{z'}
= ua_{z'}+(u+1)a_{s_{i_1}z'}= ua_{z'}+(u+1)a_{z},
$$
where $\ell^*(z)=\ell^*(z')+1$ and $z'\in\mathbf{I}_\ast(\prec_{\sigma}z)$, as required.

\smallskip
\noindent
{\it Case 2.} If $s_{i_1}{z'} \neq {z'}s_{i_1}^*$,
then $z=s_{i_1}\ltimes {z'}=s_{i_1}{z'} s_{i_1}^*$.
Thus
$$
T_{s_{i_1}}a_{z'}
=a_{s_{i_1}z's_{i_1}^*}= a_{z}, $$
where $\ell^*(z)=\ell^*(z')$, as required.

Therefore, it remains to consider the term $T_{s_{i_1}}a_{z''}$ for each $z''\in \mathbf{I}_\ast(\prec_{\sigma'}z')$.
We know that $\sigma_z=s_{i_1}x'=s_{i_1}\cdots s_{i_k}$ and $\sigma_{z'}=x'=s_{i_2}\cdots s_{i_k}$ are reduced expressions. Combining our assumption $z''\in \mathbf{I}_\ast(\prec_{\sigma'}z')$ and Lemma \ref{LVInvoModu} together we can deduce  that $T_{s_{i_1}}a_{z''}$ is a $\Z[u]$-linear combination of some $a_w$ with $w\in \mathbf{I}_\ast(\prec_{\sigma}z)$. Therefore, we get that
\begin{align*}
T_{\sigma_z}a_1=(u+1)^{\ell^*(z)}a_{z}+\sum_{w\in \mathbf{I}_\ast(\prec_{\sigma}z)}{L_{w}^{\sigma_z}a_{w}},
\end{align*}
where $L_{w}^{\sigma_z}\in u\mathbb{Z}[u]$ for each $w\in \mathbf{I}_\ast(\prec_{\sigma}z)$. This completes the proof of the proposition.
\smallskip

Note that $w\in \mathbf{I}_\ast(\prec_{\sigma}z)$ implies that $\rho(w)<\rho(z)$.

\begin{cor} \label{TwoReducedcor1} Let $z\in \mathbf{I}_\ast$ and $\sigma, \hat{\sigma}$ be two reduced $\mathbf{I}_*$-expressions of $z$. Then $$\begin{aligned}
&T_{\sigma_z}a_1=(u+1)^{\ell^*(z)}a_{z}+\sum_{\substack{w\in \mathbf{I}_\ast\\ \rho(w)<\rho(z)}}{L_{w}^{\sigma_z}a_{w}},
\qquad L_{w}^{\sigma_z}\in u\mathbb{Z}[u],\\
&T_{\sigma_z}a_1\equiv T_{\hat{\sigma}_z}a_1\pmod{\sum_{\substack{w\in \mathbf{I}_* \\ \rho(w)<\rho(z)}}u\Z[u]a_w} .
\end{aligned}
$$
\end{cor}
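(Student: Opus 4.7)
The plan is to read both parts off directly from Proposition \ref{keyprop1}, with some minor bookkeeping. First, I would observe that by Definition \ref{newOrder}, every $w\in\mathbf{I}_\ast(\prec_\sigma z)$ satisfies $\rho(w)<\rho(z)$. Hence, setting $L_w^{\sigma_z}:=0$ for those $w\in\mathbf{I}_\ast$ with $\rho(w)<\rho(z)$ that happen to lie outside $\mathbf{I}_\ast(\prec_\sigma z)$, the identity \eqref{triangularRelation1} rewrites as
\begin{align*}
T_{\sigma_z}a_1 = (u+1)^{\ell^*(z)}a_z + \sum_{\substack{w\in\mathbf{I}_\ast\\ \rho(w)<\rho(z)}} L_w^{\sigma_z}a_w,
\end{align*}
with each $L_w^{\sigma_z}\in u\Z[u]$; here the \emph{Moreover} clause of Proposition \ref{keyprop1} applies uniformly because $\rho(w)<\rho(z)$ already forces $w\neq z$. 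This gives the first identity.

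For the congruence, I would apply the first identity to both $\sigma$ and $\hat{\sigma}$ and subtract. The key input is that $\ell^*(z)$ depends only on $z$, not on the chosen reduced $\mathbf{I}_\ast$-expression --- this is the independence statement stated immediately after the definition of $\ell^*$, attributed to \cite{Hu3}. Consequently the leading terms $(u+1)^{\ell^*(z)}a_z$ cancel, and
\begin{align*}
T_{\sigma_z}a_1 - T_{\hat{\sigma}_z}a_1 = \sum_{\substack{w\in\mathbf{I}_\ast\\ \rho(w)<\rho(z)}} \bigl(L_w^{\sigma_z} - L_w^{\hat{\sigma}_z}\bigr)\, a_w
\end{align*}
lies in $\sum_{w\in\mathbf{I}_\ast,\,\rho(w)<\rho(z)} u\Z[u]\,a_w$, which is the required congruence.

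No genuine obstacle arises: the corollary is essentially a repackaging of Proposition \ref{keyprop1}, where the only nontrivial fact being invoked externally is the reduced-$\mathbf{I}_\ast$-expression invariance of $\ell^*(z)$. If one wished to avoid citing that invariance, one could instead deduce the congruence a priori by noting that the coefficient of $a_z$ in $T_{\sigma_z}a_1$ must, modulo $u\Z[u]$, equal the specialization at $u=0$ of that coefficient; comparing this against the known expansion and using that $a_z$ appears with a unit coefficient would then force both leading coefficients to agree modulo $u$, which is exactly what is needed for the congruence.
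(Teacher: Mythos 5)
Your proof is correct and is exactly the intended argument: the paper offers no explicit proof of this corollary, treating it as an immediate consequence of Proposition \ref{keyprop1} (equivalently, of \eqref{triangularRelation1}) together with the recorded fact that $\ell^*(z)$ is independent of the chosen reduced $\mathbf{I}_\ast$-expression. Both your extension of the sum from $\mathbf{I}_\ast(\prec_\sigma z)$ to all $w$ with $\rho(w)<\rho(z)$ and your subtraction argument for the congruence are precisely what the paper relies on.
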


\begin{cor}\label{cor:z to sigmaZ inj.} For each $z\in \mathbf{I}_\ast$ we fix a reduced $\mathbf{I}_\ast$-expression $\sigma$ of $z$ and
define $\sigma_z$ as in Definition \ref{sigmaZ}.
Then the map $\sigma_*\colon z\mapsto\sigma_z$ defines an injection from $\mathbf{I}_\ast$ into $W$. In other words,
$\sigma_{z_1}=\sigma_{z_2}$ if and only if $z_1=z_2$.
\end{cor}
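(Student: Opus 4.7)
The plan is to extract the element $z\in \mathbf{I}_\ast$ from the Hecke-module element $T_{\sigma_z}a_1 \in M_u$ by exploiting the triangular expansion established in Proposition \ref{keyprop1}. Since $\{a_z \mid z\in \mathbf{I}_\ast\}$ is a $\Q(u)$-basis of $M_u$, it suffices to show that $T_{\sigma_{z_1}}a_1$ and $T_{\sigma_{z_2}}a_1$ have different expansions in this basis whenever $z_1 \neq z_2$; then $\sigma_{z_1}=\sigma_{z_2}$ will force $z_1=z_2$.

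The key step is the following. Suppose $\sigma_{z_1}=\sigma_{z_2}$, so $T_{\sigma_{z_1}}a_1 = T_{\sigma_{z_2}}a_1$ in $M_u$. By symmetry I may assume $\rho(z_1)\geq \rho(z_2)$. Proposition \ref{keyprop1} applied to each fixed $\mathbf{I}_\ast$-expression yields
$$
T_{\sigma_{z_i}}a_1 = (u+1)^{\ell^*(z_i)}a_{z_i} + \sum_{w\in \mathbf{I}_\ast(\prec_\sigma z_i)} L_w^{\sigma_{z_i}}a_w, \qquad i=1,2,
$$
where every $w$ occurring in the sum satisfies $\rho(w)<\rho(z_i)$. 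I would now compare the coefficients of $a_{z_1}$ on both sides. On the left, this coefficient equals $(u+1)^{\ell^*(z_1)}$, which is nonzero in $\Z[u]$. On the right, if $z_1\neq z_2$, the term $a_{z_1}$ cannot be the leading term $a_{z_2}$ and must instead arise from the summation indexed by $w\in \mathbf{I}_\ast(\prec_\sigma z_2)$, which forces $\rho(z_1)<\rho(z_2)$, contradicting the assumption $\rho(z_1)\geq \rho(z_2)$. Therefore $z_1=z_2$.

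I do not anticipate a significant obstacle, since Proposition \ref{keyprop1} already carries all of the weight: it supplies both the nonvanishing of the leading coefficient $(u+1)^{\ell^*(z)}$ and the strict rank-triangularity $\rho(w)<\rho(z)$ of the lower-order terms. The only subtle point is to observe that the expansion holds for \emph{any} chosen reduced $\mathbf{I}_\ast$-expression of $z$, and in particular for the one fixed to define $\sigma_z$; this is precisely what makes the argument independent of the choices made in the definition of the map $\sigma_\ast$.
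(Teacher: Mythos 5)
Your proof is correct and is exactly the argument the paper intends: the paper's own proof is a one-line citation of Proposition \ref{keyprop1}, and your coefficient comparison of $a_{z_1}$ in the two triangular expansions (leading coefficient $(u+1)^{\ell^*(z)}\neq 0$, lower-order terms confined to strictly smaller $\rho$) is precisely the content of that citation. No gaps; one could even shortcut the WLOG step by noting $\rho(z_i)=\ell(\sigma_{z_i})$ forces $\rho(z_1)=\rho(z_2)$ immediately.
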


\begin{proof} This follows from Proposition \ref{keyprop1}.
\end{proof}

\begin{dfn} We define $$
\A_{-1}:=\Z[u,u^{-1},(u+1)^{-1}],\,\,\,\A_{1}:=\Z[u,u^{-1},(u-1)^{-1}] .
$$
\end{dfn}

\begin{cor}\label{prop:az-Bruhat order2} For each $z\in \mathbf{I}_\ast$ we fix a reduced $\mathbf{I}_\ast$-expression $\sigma$ of $z$ and
define $\sigma_z$ as in Definition \ref{sigmaZ}. Then
\begin{align}\label{eq:az-Bruhat order2}
a_z=\frac{1}{(u+1)^{\ell^*(z)}}{T_{\sigma_z}}a_1+\sum_{w\in \mathbf{I}_\ast(\prec_{\sigma}z)}{\xi_z^{w}T_{\sigma_{w}}a_1},
\end{align}
where for each $w\in \mathbf{I}_\ast(\prec_{\sigma}z)$, $\xi_z^{w}\in\A_{-1}$. In particular, $$
\begin{aligned}\label{eq:az-Bruhat order3}
a_z&=\frac{1}{(u+1)^{\ell^*(z)}}{T_{\sigma_z}}a_1+\sum_{\substack{w\in\mathbf{I}_\ast\\ \rho(w)<\rho(z)}}{\xi_z^{w}T_{\sigma_{w}}a_1}\\
&=\frac{1}{(u+1)^{\ell^*(z)}}{T_{\sigma_z}}a_1+\sum_{\substack{w\in\mathbf{I}_\ast\\ \ell(\sigma_w)<\ell(\sigma_z)}}{\xi_z^{w}T_{\sigma_{w}}a_1}.
\end{aligned}$$
\end{cor}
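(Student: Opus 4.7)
My plan is to invert the triangular relation provided by Proposition \ref{keyprop1} (equivalently, equation (\ref{triangularRelation1})) by induction on the rank $\rho(z)$. The base case $\rho(z)=0$ forces $z=1$, $\sigma_z=1$ and $\ell^\ast(1)=0$, so $a_1=T_1 a_1=(u+1)^{-0}T_{\sigma_1}a_1$ with an empty correction sum. The key observation that enables the entire argument is that $(u+1)$ is by definition a unit in $\A_{-1}$, while each coefficient $L_w^{\sigma_z}$ lies in $u\Z[u]\subset\A_{-1}$.

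For the inductive step, I would rewrite (\ref{triangularRelation1}) as
\begin{equation*}
a_z=\frac{1}{(u+1)^{\ell^\ast(z)}}T_{\sigma_z}a_1-\sum_{w\in \mathbf{I}_\ast(\prec_\sigma z)}\frac{L_w^{\sigma_z}}{(u+1)^{\ell^\ast(z)}}\,a_w,
\end{equation*}
where all coefficients lie in $\A_{-1}$. Since every $w$ appearing in the correction satisfies $\rho(w)<\rho(z)$, the inductive hypothesis, applied with the already-fixed reduced $\mathbf{I}_\ast$-expression $\sigma^{(w)}$ of $w$, replaces each $a_w$ by an $\A_{-1}$-linear combination of terms $T_{\sigma_{w'}}a_1$ with $\rho(w')\le\rho(w)<\rho(z)$. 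Collecting like terms then yields the claimed expansion: the coefficient of $T_{\sigma_z}a_1$ is exactly $(u+1)^{-\ell^\ast(z)}$ because all substituted terms have strictly smaller rank and cannot contribute to it. Since $\ell(\sigma_z)=\rho(z)$ for any choice of reduced $\mathbf{I}_\ast$-expression (by Lemma \ref{rankfunc}), the two displayed identities in the second form of the statement are equivalent reformulations. This directly establishes the second and third identities of the corollary.

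The subtle point, and the step I expect to be the main obstacle, is upgrading the index set from $\{w\in\mathbf{I}_\ast:\rho(w)<\rho(z)\}$ to the stricter set $\mathbf{I}_\ast(\prec_\sigma z)$ appearing in (\ref{eq:az-Bruhat order2}). The inductive step provides only that a term $w'$ introduced via expansion of $a_w$ satisfies $w'\in \mathbf{I}_\ast(\prec_{\sigma^{(w)}}w)$, i.e.\ that \emph{some} reduced $\mathbf{I}_\ast$-expression $\tau'$ of $w'$ has $\tau'_{w'}<\sigma^{(w)}_w$. To conclude $w'\in\mathbf{I}_\ast(\prec_\sigma z)$ I need to exhibit a (possibly different) reduced $\mathbf{I}_\ast$-expression $\tau$ of $w'$ with $\tau_{w'}<\sigma_z$. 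Here I would use the hypothesis that $w\in \mathbf{I}_\ast(\prec_\sigma z)$ provides a reduced $\mathbf{I}_\ast$-expression $\tilde\sigma$ of $w$ with $\tilde\sigma_w<\sigma_z$, and combine this with transitivity of the Bruhat order on $W$; the discrepancy between $\sigma^{(w)}_w$ and $\tilde\sigma_w$ is controlled by Corollary \ref{TwoReducedcor1}, which lets me swap one reduced $\mathbf{I}_\ast$-expression for another modulo a $u\Z[u]$-combination of $a_{w''}$ with $\rho(w'')<\rho(w)$; these lower-rank corrections are then absorbed by a secondary application of the inductive hypothesis. This bookkeeping is the only non-mechanical piece of the argument, and once it is carried out the first displayed identity follows.
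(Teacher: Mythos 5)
Your overall strategy---inverting the unitriangular relation (\ref{triangularRelation1}) by induction on $\rho(z)$, using that $u+1$ is a unit in $\A_{-1}$ while each $L^{\sigma_z}_w$ lies in $u\Z[u]\subseteq\A_{-1}$---is exactly what the paper intends: its entire proof is a citation of Proposition \ref{keyprop1}, (\ref{triangularRelation1}) and Corollary \ref{TwoReducedcor1}. Your argument cleanly establishes the two ``in particular'' displays indexed by $\{w\in\mathbf{I}_\ast\mid\rho(w)<\rho(z)\}$ (and the identification of that set with $\{\ell(\sigma_w)<\ell(\sigma_z)\}$ via $\ell(\sigma_w)=\rho(w)$ is correct); these weaker statements are all that is used downstream in Corollary \ref{Aneg1basis} and Proposition \ref{mainprop}.

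Your handling of the refined index set $\mathbf{I}_\ast(\prec_\sigma z)$ in (\ref{eq:az-Bruhat order2}), however, does not close. The inductive hypothesis for $w\in\mathbf{I}_\ast(\prec_\sigma z)$ is stated for the \emph{globally fixed} expression $\sigma^{(w)}$, and produces terms $T_{\sigma_{w'}}a_1$ with $w'\in\mathbf{I}_\ast(\prec_{\sigma^{(w)}}w)$, i.e.\ with a witness $\tau'_{w'}<\sigma^{(w)}_w$. Since $\sigma^{(w)}_w$ need not lie below $\sigma_z$ (only the possibly different witness $\tilde\sigma_w$ does), transitivity of the Bruhat order does not apply. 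Your proposed repair via Corollary \ref{TwoReducedcor1} does not help: that corollary controls the discrepancy $T_{\tilde\sigma_w}a_1-T_{\sigma^{(w)}_w}a_1$ only as a $u\Z[u]$-combination of $a_v$ with $\rho(v)<\rho(w)$, with no Bruhat constraint on $v$, so the ``secondary application of the inductive hypothesis'' reproduces the identical difficulty one rank lower rather than resolving it. To close the argument one needs either (i) to know that $\mathbf{I}_\ast(\prec_\sigma z)$ is independent of the choice of $\sigma$ (equivalently, equals $\{w\in\mathbf{I}_\ast\mid w<z\}$), which follows from Hultman's subword property for reduced $\mathbf{I}_\ast$-expressions together with the braid-relation results of \cite{HMP2} and makes the index set transitively closed so that your induction runs verbatim with the fixed expressions; or (ii) to run the induction entirely with witnessing expressions and accept the weaker rank-indexed displays when converting to the fixed ones. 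As written, your transitivity argument applies only to the witnessing expressions, not to the fixed ones appearing in the statement.
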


\begin{proof} This follows from Proposition \ref{keyprop1}, (\ref{triangularRelation1}) and Corollary \ref{TwoReducedcor1}.
\end{proof}

Let $M_{\A,u}$ be the free $\A$-submodule of $M_u$ generated by $\{a_z \mid z\in \mathbf{I}_\ast\}$. It is clear that $M_{\A,u}$ naturally becomes a left $\HH_{\A,u}$-module. We set $$
M_{\A_{-1},u}:=\A_{-1}\otimes_{\A}M_{\A,u},\,\,\,M_{\A_{1},u}:=\A_{1}\otimes_{\A}M_{\A,u} .
$$

For each $z\in \mathbf{I}_\ast$, we identify
 $1_{\A_{-1}}\otimes_{\A} a_z$,
 $1_{\A_{1}}\otimes_{\A} a_z$,
 $1_{\A_{\pm 1}}\otimes_{\A} a_z$
 and $1_{\Q(u)}\otimes_{\A} a_z$ with $a_z$.

\begin{cor} \label{Aneg1basis} For each $z\in \mathbf{I}_\ast$ we fix a reduced $\mathbf{I}_\ast$-expression $\sigma$ of $z$ and
define $\sigma_z$ as in Definition \ref{sigmaZ}.  Then the elements in the following set \begin{equation}\label{newBasis1}
\bigl\{T_{\sigma_z}a_1\bigm|z\in \mathbf{I}_\ast\bigr\}
\end{equation}
form an $\A_{-1}$-basis of $M_{\A_{-1},u}$,  an $\A_{\pm 1}$-basis of $M_{\A_{\pm 1},u}$ and a $\Q(u)$-basis of $M_u$. The same is true if one replaces $\A_{-1}$ with any field $K$ and $u$ with any $\lam\in K^\times$ whenever there is a ring homomorphism $\phi: \A_{-1}\rightarrow K$ with $\lam=\phi(u)$.
\end{cor}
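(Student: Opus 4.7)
My plan is to view the assertion as a triangular change-of-basis between the tautological $\A_{-1}$-basis $\{a_z \mid z \in \mathbf{I}_\ast\}$ of $M_{\A_{-1},u}$ and the proposed family $\{T_{\sigma_z}a_1\}$, with everything already furnished by Proposition \ref{keyprop1} and Corollary \ref{prop:az-Bruhat order2}. Spanning is immediate from Corollary \ref{prop:az-Bruhat order2}, which exhibits each $a_z$ as an explicit $\A_{-1}$-linear combination of the elements $T_{\sigma_w}a_1$ with $\rho(w)\leq\rho(z)$; iterating on rank, the $\A_{-1}$-span of $\{T_{\sigma_z}a_1\}$ contains every $a_z$ and therefore all of $M_{\A_{-1},u}$.

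For linear independence, fix a total order on $\mathbf{I}_\ast$ refining $\rho$, and suppose a finite $\A_{-1}$-linear combination $\sum_z c_z T_{\sigma_z}a_1 = 0$ has some nonzero coefficient. Let $z_0$ be maximal in its support. Expanding each $T_{\sigma_z}a_1$ in the $\{a_w\}$-basis via (\ref{triangularRelation1}), observe that for every $z \neq z_0$ in the support, $T_{\sigma_z}a_1$ contributes only to $a_w$-coordinates with $\rho(w) \leq \rho(z)$, and among those, only $w=z$ attains rank $\rho(z)$; in no case is $w=z_0$. Hence the $a_{z_0}$-coordinate of the sum is exactly $c_{z_0}(u+1)^{\ell^*(z_0)}$, which must vanish. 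Since $(u+1)^{\ell^*(z_0)}$ is a unit in $\A_{-1}$ and the $a_w$ form an $\A_{-1}$-basis of $M_{\A_{-1},u}$, this forces $c_{z_0}=0$, contradicting maximality.

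The assertions for $\A_{\pm 1}$, for $\Q(u)$, and for any specialization $\phi: \A_{-1} \to K$ with $\phi(u)=\lambda$ follow by exactly the same triangular argument, the crucial point being that $(u+1)$, respectively $\lambda+1$, is invertible in each of these rings (for $K$, because the hypothesis that $\phi$ extends off $\A$ forces $\phi((u+1)^{-1})\in K$, hence $\lambda+1\neq 0$). I expect no serious obstacle: once the triangular identities of Proposition \ref{keyprop1} and Corollary \ref{prop:az-Bruhat order2} are in hand, the corollary reduces to a routine linear-algebra observation. The only point requiring care is verifying that the maximal support element cleanly isolates its own $a_{z_0}$-coefficient, which is exactly where the strict rank drop of the non-diagonal terms in (\ref{triangularRelation1}) is used.
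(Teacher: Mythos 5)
Your proof is correct and takes essentially the same route as the paper: the paper's own proof is a one-line citation of Proposition \ref{keyprop1} and the triangular relation (\ref{triangularRelation1}), and your argument simply makes explicit the rank-triangular change of basis (with diagonal entries $(u+1)^{\ell^*(z)}$, units in $\A_{-1}$) that this citation relies on, including the correct observation that any homomorphism $\phi:\A_{-1}\to K$ forces $\lambda\neq 0,-1$.
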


\begin{proof} This follows from Proposition \ref{keyprop1} and (\ref{triangularRelation1}).
\end{proof}

\begin{cor} \label{SurJect} For each $z\in \mathbf{I}_\ast$ we fix a reduced $\mathbf{I}_\ast$-expression $\sigma$ of $z$ and
define $\sigma_z$ as in Definition \ref{sigmaZ}. We have that $$
\HH_{\A_{-1},u}X_\emptyset=\text{$\A_{-1}$-Span}\bigl\{T_{\sigma_z}X_{\emptyset}\bigm|z\in \mathbf{I}_\ast\bigr\}.
$$
In particular, $$
\HH_{\A_{\pm 1},u}X_\emptyset=\text{$\A_{\pm 1}$-Span}\bigl\{T_{\sigma_z}X_{\emptyset}\bigm|z\in \mathbf{I}_\ast\bigr\},
$$
and the map $\mu\downarrow_{M_{\A_{\pm 1},u}}: M_{\A_{\pm 1},u}\rightarrow
\HH_{\A_{\pm 1},u}X_\emptyset$ is surjective.
\end{cor}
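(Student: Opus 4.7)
The plan is to transport the $\A_{-1}$-basis statement of Corollary~\ref{Aneg1basis} from $M_{\A_{-1},u}$ across to $\HH_{\A_{-1},u}X_\emptyset$ via the $\HH_u$-module isomorphism $\mu$ of Theorem~\ref{iso1}. The key observation is that, since $\mu$ is $\Q(u)$-linear (hence $\A_{-1}$-linear) and satisfies $\mu(ha_1)=hX_\emptyset$ for every $h\in\HH_u$, any $\A_{-1}$-linear identity among elements of the form $ha_1$ inside $M_{\A_{-1},u}$ pushes forward to the corresponding identity among the elements $hX_\emptyset$.

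For the inclusion $\supseteq$, each $T_{\sigma_z}X_\emptyset$ lies trivially in $\HH_{\A_{-1},u}X_\emptyset$ because $T_{\sigma_z}\in\HH_{\A,u}\subseteq\HH_{\A_{-1},u}$. For $\subseteq$, since $\HH_{\A_{-1},u}$ has $\A_{-1}$-basis $\{T_w\mid w\in W\}$, it suffices to show that $T_wX_\emptyset\in\A_{-1}\text{-Span}\{T_{\sigma_z}X_\emptyset\}$ for each $w\in W$. To this end, I observe that $T_wa_1\in M_{\A,u}\subseteq M_{\A_{-1},u}$, so by Corollary~\ref{Aneg1basis} there is a finite expansion
\begin{equation*}
T_wa_1=\sum_{z\in\mathbf{I}_\ast} d_{z,w}\,T_{\sigma_z}a_1,\qquad d_{z,w}\in\A_{-1}.
\end{equation*}
Applying $\mu$ to both sides then yields $T_wX_\emptyset=\sum_{z} d_{z,w}\,T_{\sigma_z}X_\emptyset$, as required.

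The $\A_{\pm 1}$-statement is an immediate base change along the inclusion $\A_{-1}\hookrightarrow\A_{\pm 1}$. For the surjectivity of $\mu\downarrow_{M_{\A_{\pm 1},u}}$, note that $M_{\A_{\pm 1},u}$ has $\A_{\pm 1}$-basis $\{T_{\sigma_z}a_1\mid z\in\mathbf{I}_\ast\}$ by Corollary~\ref{Aneg1basis}, and each basis element maps under $\mu$ to $T_{\sigma_z}X_\emptyset$; these in turn $\A_{\pm 1}$-span $\HH_{\A_{\pm 1},u}X_\emptyset$ by what was just proved, so the image exhausts the target. I expect no genuine obstacle here: the essential content, namely that the change of basis between $\{a_z\}$ and $\{T_{\sigma_z}a_1\}$ has coefficients in $\A_{-1}$, was already carried out in Corollary~\ref{prop:az-Bruhat order2} and Corollary~\ref{Aneg1basis}, and the present corollary is simply the $\mu$-image of that statement.
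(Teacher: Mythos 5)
Your proof is correct and follows essentially the same route as the paper: the paper's one-line proof likewise combines Corollary~\ref{Aneg1basis} with the fact that $\mu$ carries $ha_1$ to $hX_\emptyset$, so that the spanning set $\{T_{\sigma_z}a_1\}$ of $M_{\A_{-1},u}$ pushes forward to a spanning set of the image $\HH_{\A_{-1},u}X_\emptyset$. You have merely written out explicitly the expansion of $T_wa_1$ and the base change to $\A_{\pm 1}$ that the paper leaves implicit.
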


\begin{proof} This follows from Corollary \ref{Aneg1basis} and the surjectivity of $\mu\downarrow_{M_{\A_{-1},u}}: M_{\A_{-1},u}\twoheadrightarrow\HH_{\A_{-1},u}X_\emptyset$.
\end{proof}

For any field $K$ and any ring homomorphism $\phi: \A_{\pm 1}\rightarrow K$ with $\lam=\phi(u)$, we define $$
\mu_K: M_{\lam}\rightarrow\HH_\lam (1_K\otimes_{\A_{\pm 1}}X_\emptyset)
$$
to be the composition of the following surjection $$\Id_K\otimes_{\A_{\pm 1}}\mu\downarrow_{M_{\A_{\pm 1},u}}: M_{\lam}=K\otimes_{\A_{\pm 1}}M_{\A_{\pm 1},u}\twoheadrightarrow
K\otimes_{\A_{\pm 1}}\HH_{\A_{\pm 1},u}X_\emptyset$$ with the canonical surjective homomorphism $$\iota_K: K\otimes_{\A_{\pm 1}}\HH_{\A_{\pm 1},u}X_\emptyset\twoheadrightarrow
\HH_\lam (1_K\otimes_{\A_{\pm 1}} X_\emptyset)$$ introduced in Conjecture \ref{conj1}. By definition, we know that $\mu_K$ is surjective.

\begin{prop} \label{mainprop}  Let $K$ be a field. For any ring homomorphism $\phi: \A_{\pm 1}\rightarrow K$ with $\lam=\phi(u)$, the elements in the following set \begin{equation}\label{YZ2}
\bigl\{Y_{K,z}:=\mu_K\bigl(1_K\otimes_{\A_{\pm 1}}a_z\bigr)\in \HH_{\lam}(1\otimes_{\A_{\pm 1}} X_\emptyset)\bigm|z\in \mathbf{I}_\ast\bigr\}
\end{equation}
form an $K$-basis of $\HH_{\lam}(1_K\otimes_{\A_{\pm 1}} X_\emptyset)$. In particular, $\mu_K$ is a left $\HH_\lam$-module isomorphism. Furthermore, the elements in the following set \begin{equation}\label{YZ1}
\bigl\{Y_z:=\mu(a_z)\in \HH_{\A_{\pm 1},u}X_\emptyset\bigm|z\in \mathbf{I}_\ast\bigr\}
\end{equation}
form an $\A_{\pm 1}$-basis of $\HH_{\A_{\pm 1},u}X_\emptyset$.
\end{prop}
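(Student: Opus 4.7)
The plan is to deduce the proposition from a unitriangular description of the coefficient expansion of each $Y_{K,z}$ in the $T_x$-basis of $\hat\HH_\lam$. Surjectivity of $\mu_K$ is already built into its construction: it is the composition of the isomorphism $\Id_K\otimes_{\A_{\pm 1}}\mu\downarrow_{M_{\A_{\pm 1},u}}$ onto $K\otimes_{\A_{\pm 1}}\HH_{\A_{\pm 1},u}X_\emptyset$ (guaranteed by Corollary \ref{Aneg1basis} and Corollary \ref{SurJect}) with the canonical surjection $\iota_K$. So the whole task reduces to showing $\{Y_{K,z}\mid z\in\mathbf{I}_\ast\}$ is $K$-linearly independent inside $\hat\HH_\lam$.

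The key inputs are Lemma \ref{keylem1} and Proposition \ref{keyprop1}. Writing $Y_z=\sum_{x\in W}\tilde L_z^xT_x$, I would single out the column family $\{\sigma_w\mid w\in\mathbf{I}_\ast\}$, which is an injectively-indexed subset of $W$ by Corollary \ref{cor:z to sigmaZ inj.}. From Proposition \ref{keyprop1} one has $L_z^{\sigma_w}=0$ whenever $z\neq w$ and $\rho(z)\geq\rho(w)$, while $L_z^{\sigma_z}=(u+1)^{\ell^*(z)}$. A direct bar-involution computation using $\overline{u+1}=(u-1)u^{-1}$ then gives $\tilde L_z^{\sigma_z}=\pm(u-1)^{\ell^*(z)}u^{-\ell^*(z)}$, whose image under $\phi$ is a non-zero element of $K$ since $\lam\notin\{0,1\}$.

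For the final step, suppose $\sum_{z\in S}c_zY_{K,z}=0$ for some finite nonempty $S\subset\mathbf{I}_\ast$ with every $c_z\in K^\times$. Reading off the coefficient of $T_{\sigma_w}$ for each $w\in S$ produces a $|S|\times|S|$ linear system whose matrix $(\phi(\tilde L_z^{\sigma_w}))_{z,w\in S}$, after ordering $S$ by any linear extension of the rank function $\rho$, is upper triangular with the non-zero diagonal above; hence $S$ must be empty, and $\mu_K$ is an isomorphism. The same triangular argument carried out over $\A_{\pm 1}$, in which the diagonal entries $\pm(u-1)^{\ell^*(z)}u^{-\ell^*(z)}$ are units, yields the $\A_{\pm 1}$-linear independence of $\{Y_z\}$, and together with the $\A_{\pm 1}$-spanning assertion of Corollary \ref{SurJect} this gives the desired $\A_{\pm 1}$-basis of $\HH_{\A_{\pm 1},u}X_\emptyset$. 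The main delicacy is that $\hat\HH_\lam$ admits infinite $T_x$-supports and $\mathbf{I}_\ast$ itself may be infinite; the reduction to a finite $|S|\times|S|$ sub-matrix determined by the support of a hypothetical relation is precisely what makes the triangular argument go through cleanly without any topological or cell-theoretic input.
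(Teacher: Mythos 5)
Your proposal is correct and follows essentially the same route as the paper: surjectivity of $\mu_K$ from Corollaries \ref{Aneg1basis} and \ref{SurJect}, and linear independence from the triangularity of $(\widetilde L_z^{\sigma_w})$ supplied by Proposition \ref{keyprop1} and Lemma \ref{keylem1}, with diagonal entries $(1-u^{-1})^{\ell^*(z)}$ not vanishing at $u=\lam\notin\{0,1\}$. The only (harmless) divergence is in the last step: you rerun the triangular argument over $\A_{\pm 1}$ using that the diagonal entries are units there, whereas the paper instead specializes to $K=\Q(u)$ and deduces $\Ker\mu\downarrow_{M_{\A_{\pm 1},u}}=0$ from freeness of $M_{\A_{\pm 1},u}$; both are valid.
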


\begin{proof} We consider the first part of the proposition. Recall that $\mu_K$ is surjective. Since $\{1_K\otimes_{\A_{\pm 1}}a_z\mid z\in \mathbf{I}_\ast\}$ is a $K$-basis of $M_\lam$, it suffices to show that the elements in the subset (\ref{YZ2}) are $K$-linearly independent. Note that the assumption $\lam\neq 0,-1$ is used here to ensure that $\mu_K$ is surjective (by Corollary \ref{Aneg1basis} and Corollary \ref{SurJect}).

Suppose that the elements in the subset (\ref{YZ2}) are $K$-linearly dependent. That says, we can find an positive integer $m$ and $$\{z_1,z_2,\cdots,z_m\}\subseteq \mathbf{I}_\ast, $$ such that $Y_{K,z_1},\cdots, Y_{K,z_m}$ are $K$-linearly dependent. For each $z\in\{z_1,\cdots,z_m\}$, we fix a reduced $\mathbf{I}_\ast$-expression of $z$ and define $\sigma_z$ as in Definition \ref{sigmaZ}.
Without loss of generality, we can assume that  $\rho(z_1)\leq \rho(z_2) \leq \cdots \leq \rho(z_m)$. Equivalently,
$\ell(\sigma_{z_1}) \leq \ell(\sigma_{z_2})\leq \cdots \leq \ell(\sigma_{z_m})$. Furthermore, we can find an integer $n\geq m$ and  a finite subset $\{w_{m+1},\cdots,w_{n}\}$ of $W\setminus\{\sigma_{z_1},\cdots,\sigma_{z_m}\}$ such that for each $1\leq j\leq m$, \begin{equation}\label{TriangularRelations2}
Y_{z_j}=\mu(a_{z_j})=\sum_{i=1}^{m}\widetilde{L}^{\sigma_{z_i}}_{z_j}T_{\sigma_{z_i}}+
\sum_{k=m+1}^{n}\widetilde{L}^{w_k}_{z_j}T_{w_k} .
\end{equation}
By Definition \ref{Stru1}, Definition \ref{Stru2}, Lemma \ref{keylem1} and Proposition \ref{keyprop1}, $$
(Y_{z_1},Y_{z_2},\cdots,Y_{z_m})=(\underbrace{T_{\sigma_{z_1}},T_{\sigma_{z_2}},\cdots,T_{\sigma_{z_m}}}_{\text{$m$ terms}},\underbrace{T_{w_{m+1}},T_{w_{m+2}},\cdots,T_{w_n}}_{\text{$(n-m)$ terms}})D_1A_uD_2,
$$
where $D_1$ is the following $n\times n$ diagonal matrix: $$
D_1=\diag\bigl((-1)^{\ell(\sigma_{z_1})},\cdots,(-1)^{\ell(\sigma_{z_m})},(-1)^{\ell(w_{m+1})},\cdots,(-1)^{\ell(w_{n})}\bigr),
$$
$D_2$ is the following $n\times n$ diagonal matrix: $$
D_2=\diag\bigl(\epsilon(z_1),\cdots,\epsilon(z_m)\bigr) ,
$$
and $A_u$ is the following $n\times m$ matrix in $M_{n\times m}(\Z[u^{-1}])$:
\begin{align*}
A_u=&
  \left(
\renewcommand\arraystretch{1.8}
  \begin{array}{cccc}
  (1-u^{-1})^{\ell^*(z_1)} &    &  & \\
   & (1-u^{-1})^{\ell^*(z_2)}
  &  \multicolumn{2}{c}{\raisebox{1.3ex}[0pt]{\Huge $0$}}  \\
   &   & \ddots &   \\
\multicolumn{2}{c}{\raisebox{1.3ex}[0pt]{\Huge $*$}}
&
& (1-u^{-1})^{\ell^*(z_m)} \\
{\raisebox{0ex}[0pt]{\Huge $*$}} & {\raisebox{0ex}[0pt]{\Huge $*$}} & {\raisebox{0ex}[0pt]{\Huge $*$}}  &  {\raisebox{0ex}[0pt]{\Huge $*$}} \\
  \end{array}
  \right)_{n\times m},
\end{align*}
such that the top $m\times m$ submatrix is a lower triangular matrix with diagonal elements given by $$
\bigl\{(1-u^{-1})^{\ell^*(z_1)},\cdots,(1-u^{-1})^{\ell^*(z_m)}\bigr\}.
$$

By assumption $\lam=\phi(u)\not\in\{0,1,-1\}$. We define $A_\lam:=A_u\downarrow_{u:=\lam}$. Then $$\begin{aligned}
&(1_K\otimes_{\A_{\pm 1}}Y_{z_1},\cdots,1_K\otimes_{\A_{\pm 1}}Y_{z_m})=\\
& (\underbrace{1_K\otimes_{\A_{\pm 1}}T_{\sigma_{z_1}},\cdots,1_K\otimes_{\A_{\pm 1}}T_{\sigma_{z_m}}}_{\text{$m$ terms}},\underbrace{1_K\otimes_{\A_{\pm 1}}T_{w_{m+1}},\cdots,1_K\otimes_{\A_{\pm 1}}T_{w_n}}_{\text{$(n-m)$ terms}})D_1A_\lam D_2,
\end{aligned}
$$
By the above discussion and the assumption that $\lam\neq 1$ we can see that $\rank A_\lam=m$. Note that $\{1_K\otimes_{\A_{\pm 1}}T_w \mid w\in W\}$ is a subset of
$K$-linearly independent elements in $\hat{\HH}_\lam$. Since $D_1,D_2$ are invertible, it follows that $$
\{Y_{K,z_1}=1_K\otimes_{\A_{\pm 1}}Y_{z_1},\cdots,Y_{K,z_m}=1_K\otimes_{\A_{\pm 1}}Y_{z_m}\}
$$ is a set of $K$-linearly independent elements in $\HH_\lam(1_K\otimes_{\A_{\pm 1}} X_\emptyset)\subseteq \hat{\HH}_\lam$. We get a contradiction. In particular, this implies that $\mu_K$ is a left $\HH_\lam$-module isomorphism. This proves the first part of the proposition.

Finally, taking $K=\Q(u)$ we see that $\mu_{\Q(u)}$ is an isomorphism by the first part of the proposition which we have just proved. This further implies that $\Id_{\Q(u)}\otimes_{\A_{\pm 1}}\mu\downarrow_{M_{\A_{\pm 1},u}}$ is an isomorphism. Since $$
\Q(u)\otimes_{\A_{\pm 1}}\Ker\mu\downarrow_{M_{\A_{\pm 1},u}}\subseteq \Ker(\Id_{\Q(u)}\otimes_{\A_{\pm 1}}\mu\downarrow_{M_{\A_{\pm 1},u}})=\{0\} ,
$$
it follows that $\Ker\mu\downarrow_{M_{\A_{\pm 1},u}}=0$. Hence $\mu\downarrow_{M_{\A_{\pm 1},u}}$ is an isomorphism and the elements in (\ref{YZ1}) form an $\A_{\pm 1}$-basis of $\HH_{\A_{\pm 1},u}X_\emptyset$.
This proves the second part of the proposition and hence we complete the proof of the proposition.
\end{proof}

\medskip

\noindent
{\bf Proof of Conjecture \ref{conj1}:} (1) and (2) follows from Proposition \ref{mainprop}. Now (3) follows from (1) and (2). It remains to consider (4).
For this purpose, we assume that $W$ is finite. Then $X_\emptyset\in\HH_{\A_{\pm 1},u}$.

By (\ref{TriangularRelations2}) and Proposition \ref{keyprop1}, we easily see that the elements in the following set $$
\{Y_{z} \mid z\in \mathbf{I}_\ast\}\sqcup\{T_{w} \mid w\in W\setminus\{\sigma_z \mid z\in \mathbf{I}_\ast\}\}
$$
form an $\A_{\pm 1}$-basis of $\hat{\HH}_{\A_{\pm 1},u}={\HH}_{\A_{\pm 1},u}$. This implies that $\HH_{\A_{\pm 1},u}X_{\emptyset}$ is a pure and free $\A_{\pm 1}$-submodule of ${\HH}_{\A_{\pm 1},u}$. This completes the proof of Conjecture \ref{conj1}.
\bigskip

\begin{cor} \label{newBasis2}
The elements in the following set \begin{equation}\label{newBasis22}
\bigl\{T_{\sigma_z}X_\emptyset\bigm|z\in \mathbf{I}_\ast\bigr\}
\end{equation}
form an $\A_{\pm 1}$-basis of ${\HH}_{\A_{\pm 1},u} X_\emptyset$. The same is true if one replaces $\A_{\pm 1}$ with any field $K$ and $u$ with any $\lam\in K^\times$ whenever there is a ring homomorphism $\phi: \A_{\pm 1}\rightarrow K$ with $\lam=\phi(u)$.
\end{cor}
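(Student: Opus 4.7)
The plan is to transport the $\A_{\pm 1}$-basis of $M_{\A_{\pm 1},u}$ given by Corollary \ref{Aneg1basis} across the module isomorphism $\mu\downarrow_{M_{\A_{\pm 1},u}}\colon M_{\A_{\pm 1},u}\to \HH_{\A_{\pm 1},u}X_\emptyset$ supplied by Proposition \ref{mainprop}. Since $\mu$ is by definition a left $\HH_u$-module homomorphism sending $a_1$ to $X_\emptyset$, for each $z\in\mathbf{I}_\ast$ we have
\[
\mu(T_{\sigma_z}a_1)=T_{\sigma_z}\mu(a_1)=T_{\sigma_z}X_\emptyset.
\]
Because Corollary \ref{Aneg1basis} asserts that $\{T_{\sigma_z}a_1\mid z\in\mathbf{I}_\ast\}$ is an $\A_{\pm 1}$-basis of $M_{\A_{\pm 1},u}$, and module isomorphisms carry bases to bases, the image $\{T_{\sigma_z}X_\emptyset\mid z\in\mathbf{I}_\ast\}$ is an $\A_{\pm 1}$-basis of $\HH_{\A_{\pm 1},u}X_\emptyset$, as claimed.

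For the specialized version with $\lam=\phi(u)\in K\setminus\{0,1,-1\}$, I would apply the same argument to the $K$-linear isomorphism $\mu_K\colon M_\lam\to\HH_\lam(1_K\otimes_{\A_{\pm 1}}X_\emptyset)$ also established in Proposition \ref{mainprop}. The last sentence of Corollary \ref{Aneg1basis} ensures that $\{T_{\sigma_z}(1_K\otimes_{\A_{\pm 1}}a_1)\mid z\in\mathbf{I}_\ast\}$ is a $K$-basis of $M_\lam$, and pushing this basis through $\mu_K$ produces $\{T_{\sigma_z}(1_K\otimes_{\A_{\pm 1}}X_\emptyset)\mid z\in\mathbf{I}_\ast\}$, which is therefore a $K$-basis of $\HH_\lam(1_K\otimes_{\A_{\pm 1}}X_\emptyset)$.

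There is no real obstacle at this stage: the substantive content of the corollary is already encoded in Proposition \ref{mainprop} (where invertibility of the specialized matrix $A_\lam$ for $\lam\notin\{0,1,-1\}$ forces $\mu$ and $\mu_K$ to be isomorphisms) and in Corollary \ref{Aneg1basis} (whose triangularity is derived from Proposition \ref{keyprop1}). Corollary \ref{newBasis2} is thus a formal transport-of-structure statement, obtained by applying the established isomorphisms to an already identified basis of the domain.
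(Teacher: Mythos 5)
Your proposal is correct and follows essentially the same route as the paper: the paper likewise combines Corollary \ref{Aneg1basis} with the isomorphism $\mu\downarrow_{M_{\A_{\pm 1},u}}\colon M_{\A_{\pm 1},u}\to\HH_{\A_{\pm 1},u}X_\emptyset$ (established via Corollary \ref{SurJect} and the proof of Conjecture \ref{conj1}, i.e.\ Proposition \ref{mainprop}) to transport the basis $\{T_{\sigma_z}a_1\}$ to $\{T_{\sigma_z}X_\emptyset\}$, and the specialized case is handled identically through $\mu_K$.
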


\begin{proof} This follows from Corollary \ref{Aneg1basis}, Corollary \ref{SurJect} and Conjecture \ref{conj1} (which we have just proved).
\end{proof}

By Lemma \ref{keylem1}, $$
\mu(a_z)=\sum_{x\in W}\widetilde{L}_z^x T_x , $$
where $\widetilde{L}_z^x\in\Z[u^{-1}]$. Following \cite[Theorem 0.2(b)]{Lu3}, we define $n_z^x:=\widetilde{L}_z^x\downarrow_{u^{-1}=0}\in\Z$.
Then Lusztig has proved in \cite[Theorem 0.2(c)]{Lu3} that there is a unique surjective function $\pi: W\twoheadrightarrow \mathbf{I}_\ast$ such that for any $x\in W, z\in\mathbf{I}_\ast$, we have $n_z^x=1$ if $z=\pi(x)$; and $n_z^x=0$ if $z\neq\pi(x)$.

Our next result shows that the map $\sigma_*$ which we introduced in Corollary \ref{cor:z to sigmaZ inj.} is a right inverse of $\pi$.

\begin{cor} Let $\sigma_*\colon \mathbf{I}_\ast\hookrightarrow W$ be the injection defined in Corollary \ref{cor:z to sigmaZ inj.}. Then
$\pi\circ\sigma_*=\id_{\mathbf{I}_*}$.
\end{cor}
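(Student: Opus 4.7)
The plan is to unravel the characterisation of $\pi$ and reduce everything to an explicit formula for $\widetilde{L}_z^{\sigma_z}$. By the uniqueness statement in \cite[Theorem 0.2(c)]{Lu3}, it is enough to verify that $n_z^{\sigma_z}=1$ for every $z\in \mathbf{I}_\ast$, because that alone forces $\pi(\sigma_z)=z$.

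First I would expand $\widetilde{L}_z^{\sigma_z}$ using Definition \ref{Stru2}, namely
\[
\widetilde{L}_z^{\sigma_z}=(-1)^{\ell(\sigma_z)}\epsilon(z)\,\overline{L_z^{\sigma_z}}.
\]
Proposition \ref{keyprop1} gives the key input $L_z^{\sigma_z}=(u+1)^{\ell^*(z)}$, so applying the bar involution $\overline{u}=-u^{-1}$ yields
\[
\overline{L_z^{\sigma_z}}=(-u^{-1}+1)^{\ell^*(z)}=(1-u^{-1})^{\ell^*(z)}.
\]

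Next, I would observe that $\ell(\sigma_z)=\rho(z)$ (by Definition \ref{sigmaZ}, since $\sigma$ is a reduced $\mathbf{I}_\ast$-expression of $z$) and $\epsilon(z)=(-1)^{\rho(z)}$ by definition, so the sign prefactor
\[
(-1)^{\ell(\sigma_z)}\epsilon(z)=(-1)^{\rho(z)}(-1)^{\rho(z)}=1
\]
collapses to $1$. Consequently
\[
\widetilde{L}_z^{\sigma_z}=(1-u^{-1})^{\ell^*(z)}\in\Z[u^{-1}].
\]

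Finally, specialising $u^{-1}=0$ gives $n_z^{\sigma_z}=1^{\ell^*(z)}=1$, so $\pi(\sigma_z)=z$ by Lusztig's characterisation, which is the desired identity $\pi\circ\sigma_\ast=\id_{\mathbf{I}_\ast}$. There is really no obstacle here: the entire argument is a direct computation once Proposition \ref{keyprop1} has supplied the precise leading coefficient $(u+1)^{\ell^*(z)}$. The only point requiring a moment of care is matching the sign $(-1)^{\ell(\sigma_z)}\epsilon(z)=1$, which is exactly why the bar involution is normalised by $(-1)^{\ell(x)}$ in Definition \ref{Stru2}.
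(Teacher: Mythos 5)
Your proof is correct and follows essentially the same route as the paper: both rest on Proposition \ref{keyprop1} giving $L_z^{\sigma_z}=(u+1)^{\ell^*(z)}$ with the lower-order terms in $u\Z[u]$, the sign cancellation $(-1)^{\ell(\sigma_z)}\epsilon(z)=1$, and Lusztig's characterisation of $\pi$ via $n_z^x$. The only difference is cosmetic — the paper packages the specialisation $u^{-1}:=0$ as the $u:=0$ specialisation of the module $M_0$, whereas you compute $\widetilde{L}_z^{\sigma_z}=(1-u^{-1})^{\ell^*(z)}$ directly; both yield $n_z^{\sigma_z}=1$ and hence $\pi(\sigma_z)=z$.
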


\begin{proof} Let $z\in\mathbf{I}_\ast$. Following \cite[\S1.8]{Lu3}, we use $\{\underline{T}_w \mid w\in W\}$ to denote the standard basis of the specialization $\HH_0$ of $\HH_u$ at $u:=0$, and use $M_0$ to denote the specialization of $M$ at $u:=0$. Then $M_0$ is a $\Q$-space with basis $\{\underline{a}_w \mid w\in\mathbf{I}_\ast\}$ and with $\HH_0$-module structure given by $$\begin{aligned}
& \underline{T}_s\underline{a}_w=\underline{a}_{sw}\,\,\,\text{if $sw=ws^\ast>w$;}\\
& \underline{T}_s\underline{a}_w=\underline{a}_{sws^\ast}\,\,\,\text{if $sw\neq ws^*>w$;}\\
& \underline{T}_s\underline{a}_w=-\underline{a}_{w}\,\,\,\text{if $sw<w$,}\end{aligned}
$$
where $s\in S, w\in\mathbf{I}_\ast$.

Setting $u:=0$ on both sides of (\ref{triangularRelation1}), we get that  $$
\underline{T}_{\sigma_z}\underline{a}_1=\underline{a}_{z} .
$$
On the other hand, since $\widetilde{L}^{\sigma_z}_x:=(-1)^{\ell(\sigma_z)}\eps(x)\overline{L^{\sigma_z}_x}$ for any $x\in\mathbf{I}_\ast$,
setting $u:=0$ in $L^{\sigma_z}_x$ is equivalent to setting $u^{-1}:=0$ in $\widetilde{L}^{\sigma_z}_x$. We can deduce from \cite[Theorem 0.2(c)]{Lu3}
that $$
\pi\circ\sigma_*(z)=\pi(\sigma_z)=z.
$$
Note that $(-1)^{\ell(\sigma_z)}\eps(z)=(-1)^{\rho(z)}(-1)^{\rho(z)}=1$. This completes the proof of the corollary.
\end{proof}

\section*{Acknowledgements}

The research was carried out under the support from the  National Natural Science Foundation of China (No. 11525102, 11471315).
\bigskip

\bigskip
\bigskip


\begin{thebibliography}{00}






\bibitem{GP}
{\sc M.~Geck and G. Pfeiffer}, {\em Characters of finite Coxeter groups and Iwahori-Hecke algebras}, London Mathematical Society Monographs New Series
{\bf 446}, Clarendon Press Oxford, 2000.


\bibitem{HMP2}
{\sc Z.~Hamaker, E.~Marberg and B.~Pawlowski}, {\em Involution words II: braid relations and atomic structures},  J. Algebraic Combin., {\bf 45}(3)  (2017),  701--743.

\bibitem{Hu1}
{\sc A.~Hultman}, {\em Fixed points of involutive automorphisms of the Bruhat order}, Adv. Math., {\bf 195}(1) (2005), 283-¨C296.

\bibitem{Hu2}
{\sc A.~Hultman},
{\em The Combinatorics of twisted involutions in Coxeter groups}, Trans. Amer. Math. Soc., {\bf 359}(6) (2007), 2787--2798.

\bibitem{Hu3}
\leavevmode\vrule height 2pt depth -1.6pt width 23pt, {\em Twisted identities in Coxeter groups}, J. Algebraic Combin., {\bf 28}(2) (2008),
313--332,

\bibitem{Hum} {\sc J.E.~Humphreys}, {\em Reflection Groups and Coxeter Groups},
  Cambridge Studies in Advanced Mathematics, Vol. {\bf 29}, Cambridge Univ. Press,
  Cambridge, UK, 1990.

\bibitem{HZH1}
{\sc J.~Hu, J. Zhang}, {\em On involutions in symmetric groups and a conjecture of Lusztig}, Adv. Math., {\bf 364} (2016), 1189¡ª-1254.



\bibitem{KL} D. Kazhdan and G. Lusztig, {\em Representations of Coxeter groups
 and Hecke algebras}, Invent.\ Math., {\bf 53} (1979), 165--184.

\bibitem{Ko}
{\sc R.E.~Kottwitz}, {\em Involutions in Weyl groups}, Representation Theory, {\bf 4} (2000), 1--15.

\bibitem{Lu0} {\sc G.~Lusztig}, {\em Characters of reductive groups over a finite field},
Ann. Math. Studies, {\bf 107}, Princeton Univ. Press, 1984.

\bibitem{Lu1} \leavevmode\vrule height 2pt depth -1.6pt width 23pt, {\em A bar operator for involutions in a Coxeter groups},  Bull. Inst. Math. Acad. Sin. (N.S.), {\bf 7}(3)  (2012),  355--404.

\bibitem{Lu2}
\leavevmode\vrule height 2pt depth -1.6pt width 23pt, {\em Asymptotic Hecke algebras and involutions}, Perspectives in Representation Theory,  267--278, Contemp. Math., {\bf 610}, Amer. Math. Soc., Providence, RI, 2014.

\bibitem{Lu3}
\leavevmode\vrule height 2pt depth -1.6pt width 23pt, {\em  An involution based left ideal in the Hecke algebra}, Representation Theory, {\bf 20}, (8), (2016), 172--186.

\bibitem{LV1}
{\sc G.~Lusztig, D.~Vogan}, {\em Hecke algebras and involutions in Weyl groups}, Bulletin of the Institute of Mathematics Academia Sinica (New Series), {\bf 7}(3) (2012), 323--354.

\bibitem{Mar}
{\sc E.~Marberg}, {\em Positivity conjectures for Kazhdan-Lusztig theory on twisted involutions: the universal case}, Represent. Theory, {\bf 18}  (2014), 88--116.





\end{thebibliography}
\end{document}